\newtheorem{thm}{Theorem}[section]
\newtheorem{prop}[thm]{Proposition}
\newtheorem{cor}[thm]{Corollary}
\newtheorem{lemma}[thm]{Lemma}
\theoremstyle{definition}
\newtheorem{example}[thm]{Example}
\newcommand{\veps}{\varepsilon}
\newcommand{\N}{\mathbb{N}}
\newcommand\ud[2]{
	\ensuremath{\overline{#1\!\operatorname{-dens}}(#2)}
}
\newcommand\ld[2]{
	\ensuremath{\underline{#1\!\operatorname{-dens}}(#2)}
}
\newcommand\limd[1]{
	\ensuremath{d_{#1}\,\text{-}\lim}
}
\title{On the limits of real-valued functions in sets involving $ \psi $-density, and applications}
\author{Janne Heittokangas, Zinelaabidine Latreuch, Jun Wang \\ and Amine Zemirni}
\date{}
\begin{document}
\maketitle

\begin{abstract}
	We prove new results on upper and lower limits of real-valued functions by means of $\psi$-densities introduced by P.~D.~Barry in 1962. This allows us to improve several  existing results on the growth of non-decreasing and unbounded real-valued functions in sets of positive density.
	The $\psi$-densities are also used to introduce a new concept of a limit for real-valued functions.  The results in this paper are of interest in real analysis as well as in the theory of meromorphic functions. \\
	
\noindent \textit{Keywords}: {Exceptional sets, growth of functions, linear density, logarithmic density, meromorphic functions, order of growth, $ \psi $-density.}\\
2020 \textit{Mathematics Subject Classification}: {Primary 26A03; Secondary  26A12, 30D30.}
\end{abstract}

\section{Introduction}

For a real-valued function, the passage from the lower/upper limit to the usual limit is valid through a sequence. In many cases, the set, where the aforementioned passage is valid, turns to be larger than just a sequence. In fact, the problem of finding the size of this set is very interesting in the theory of meromorphic functions, where the order and lower order of growth are involved. Recall that order and lower order of a non-decreasing and unbounded function $T$, are given, respectively, by
\begin{equation*}
\overline{\rho}(T) = \limsup_{r\to\infty} \frac{\log T(r)}{\log r} \quad \text{and} \quad \underline{\rho}(T) = \liminf_{r\to\infty} \frac{\log T(r)}{\log r}.
\end{equation*}
Clearly $\underline{\rho}(T)\leq \overline{\rho}(T)$ holds in general. If so preferred, $T(r)$ can be replaced, for example, with the Nevanlinna characteristic $T(r,f)$ of a meromorphic function $f$. The order $\overline{\rho}(f)$ and the lower order $\underline{\rho}(f)$ of $f$ are defined by $\overline{\rho}(f)=\overline{\rho}(T(r,f))$ and $\underline{\rho}(f)=\underline{\rho}(T(r,f))$. For entire functions, $T(r,f)$ can be replaced with the logarithmic maximum modulus $\log M(r,f)$.

It is known that, for any fixed $\ell$ and $L$ satisfying $0\leq \ell \leq L\leq\infty$ there exists a meromorphic function $f$ of order $\overline{\rho}(f)=L$ and of lower order $\underline{\rho}(f)=\ell$ \cite[p.~238]{GO}. If $\ell<L$,
then solving the above problem allows us to know the size of sets of $r$-values, where $T(r,f)$ is near maximal~or~near minimal.
Related to this end, we recall Theorem~\ref{thm-A} from \cite[Corollary~3.7]{LHY}, where the size of such sets $D\subset [1,\infty)$ is measured in terms of upper and lower (linear) densities given by
$$
\overline{\operatorname{dens}}(D)
=\limsup_{r\to\infty}\frac{1}{r} \int_{D\cap [1,r]}dt
\quad\textnormal{and}\quad
\underline{\operatorname{dens}}(D)
=\liminf_{r\to\infty}\frac{1}{r} \int_{D\cap [1,r]}dt.
$$

\begin{thm}\label{thm-A}
	Let $f$ be a meromorphic function such that $0\leq \underline{\rho}(f)<\overline{\rho}(f)\leq\infty$, and let $\underline{\rho}(f)<a\leq b<\overline{\rho}(f)$. Then the sets
	\begin{equation*}
	E=\{r\geq 1: T(r,f)\leq r^a\}\quad\text{and}\quad
	F=\{r\geq 1: T(r,f)>r^b\}
	\end{equation*}
	are of upper density one and of lower density zero.	
\end{thm}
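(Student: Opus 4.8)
The plan is to discard the function theory altogether and work with the non-decreasing function $u(r):=T(r,f)$, which is positive for all large $r$ (since $f$ is non-constant) and satisfies $\liminf_{r\to\infty}\log u(r)/\log r=\underline{\rho}(f)$ and $\limsup_{r\to\infty}\log u(r)/\log r=\overline{\rho}(f)$. Everything follows from one elementary remark: a single radius $r_0$ at which $u(r_0)$ is small forces, by monotonicity, $u$ to stay small on all of $[1,r_0]$, while a single radius at which $u(r_0)$ is large forces $u$ to stay large on $[r_0,\infty)$; and since the comparison functions $r\mapsto r^{a}$ and $r\mapsto r^{b}$ are themselves monotone, such a radius decides membership in $E$ or in $F$ on an entire power-window of the form $[r_0^{\alpha},r_0]$ or $[r_0,r_0^{\alpha}]$. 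The density bookkeeping is then immediate from $|[r_0^{\alpha},r_0]|/r_0=1-r_0^{\alpha-1}\to1$ when $\alpha<1$ and $|[1,r_0]|/r_0^{\alpha}=r_0^{1-\alpha}\to0$ when $\alpha>1$.

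First I would prove $\overline{\operatorname{dens}}(E)=1$ and $\underline{\operatorname{dens}}(F)=0$; both rest on $\underline{\rho}(f)<a\le b$. Fix $a'$ with $\underline{\rho}(f)<a'<a$ and, by the definition of $\liminf$, a sequence $r_n\to\infty$ with $u(r_n)<r_n^{a'}$. For $r_n^{a'/a}\le r\le r_n$ monotonicity gives $u(r)\le u(r_n)<r_n^{a'}\le r^{a}$, so $[r_n^{a'/a},r_n]\subseteq E$ and hence $\overline{\operatorname{dens}}(E)\ge\limsup_n\bigl(1-r_n^{a'/a-1}\bigr)=1$ (the reverse inequality being trivial). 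Similarly, fixing $b'$ with $\underline{\rho}(f)<b'<b$ and a sequence with $u(r_n)<r_n^{b'}$, one gets $u(r)\le u(r_n)<r_n^{b'}\le r^{b}$ for $r_n^{b'/b}\le r\le r_n$, so $F\cap[1,r_n]\subseteq[1,r_n^{b'/b})$ and $\tfrac1{r_n}|F\cap[1,r_n]|\le r_n^{b'/b-1}\to0$, whence $\underline{\operatorname{dens}}(F)=0$.

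The remaining two claims, $\overline{\operatorname{dens}}(F)=1$ and $\underline{\operatorname{dens}}(E)=0$, are the mirror images and use $a\le b<\overline{\rho}(f)$ (in particular $a<\overline{\rho}(f)$). Fix $b''$ with $b<b''<\overline{\rho}(f)$, taking $b''$ to be any number $>b$ when $\overline{\rho}(f)=\infty$, and, by the definition of $\limsup$, a sequence $s_n\to\infty$ with $u(s_n)>s_n^{b''}$. For $s_n\le r\le s_n^{b''/b}$ monotonicity gives $u(r)\ge u(s_n)>s_n^{b''}\ge r^{b}$, so $[s_n,s_n^{b''/b}]\subseteq F$ and, dividing by the right endpoint, $\overline{\operatorname{dens}}(F)\ge\limsup_n\bigl(1-s_n^{1-b''/b}\bigr)=1$. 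Likewise, fixing $a''$ with $a<a''<\overline{\rho}(f)$ and a sequence with $u(s_n)>s_n^{a''}$, one gets $u(r)\ge u(s_n)>s_n^{a''}\ge r^{a}$ for $s_n\le r\le s_n^{a''/a}$, so $E\cap[1,s_n^{a''/a}]\subseteq[1,s_n)$ and $s_n^{-a''/a}\,|E\cap[1,s_n^{a''/a}]|\le s_n^{1-a''/a}\to0$, giving $\underline{\operatorname{dens}}(E)=0$.

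Finally, a word on pitfalls. The argument uses nothing beyond the monotonicity of $T(r,f)$ — no convexity in $\log r$, no value-distribution input — so it is really a statement about non-decreasing unbounded functions. The only things to verify are that every auxiliary exponent introduced is positive (this is where $\underline{\rho}(f)\ge0$ is used), that the case $\overline{\rho}(f)=\infty$ is handled by choosing the auxiliary exponents freely above $a$ and $b$, and — the one genuinely error-prone point — that in the two ``lower density zero'' estimates one must normalise by the right-hand endpoint of the window rather than by $s_n$. None of this constitutes a real obstacle; the entire content sits in the power-window densities recorded in the first paragraph.
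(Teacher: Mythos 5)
Your proof is correct: all four density claims check out, the auxiliary exponents $a',b',a'',b''$ are well defined and positive thanks to $0\le\underline{\rho}(f)<a\le b$, and you normalise each density quotient at the correct endpoint of the window. The route, however, is genuinely different from the paper's. You argue directly from the monotonicity of $T(r,f)$: a single radius where $T$ undershoots $r^{a'}$ (resp.\ overshoots $r^{b''}$) forces an entire power-window $[r_n^{a'/a},r_n]$ into $E$ (resp.\ $[s_n,s_n^{b''/b}]$ into $F$), and the linear length of such a window is asymptotically the whole interval. The paper instead obtains this statement as the $e^{\psi}$-density part of its Theorem~\ref{thm-exceptional} applied to $\varphi(r)=\log T(r,f)/\log r$ with $\psi(r)=\log r$: there the key step is the exceptional-set Lemma~\ref{exceptional}, used in a proof by contradiction --- if the upper density of the relevant set were less than one, comparing $\varphi(r)\psi(r)$ with $(K-\veps)\psi(r)$ off the set would contradict the value of the $\limsup$. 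Your argument is more elementary and self-contained (it exhibits the intervals explicitly and uses no general lemma), and in fact the same windows, measured logarithmically, already yield the quantitative bounds $(a-\ell)/a$ and $(L-b)/L$ of the paper's Corollary~\ref{coro1}. What the paper's machinery buys in exchange is generality: it handles arbitrary $\varphi$ with $\varphi\psi$ non-decreasing, arbitrary $\psi$-densities, and both finite and infinite intervals, whereas your argument is tied to the specific power-law comparison functions and to linear density.
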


Due to the logarithms appearing in the definitions of orders, it seems natural to study these growth questions in terms of logarithmic densities. Recall that the upper and lower logarithmic densities  of a set $D\subset [1,\infty)$ are given by
$$
\overline{\operatorname{logdens}}(D)
=\limsup_{r\to\infty}\frac{1}{\log r}\int_{D\cap [1,r]}\frac{dt}{t}
\quad\textnormal{and}\quad
\underline{\operatorname{logdens}}(D)
=\limsup_{r\to\infty}\frac{1}{\log r}\int_{D\cap [1,r]}\frac{dt}{t}.
$$
The connection between linear and logarithmic densities is apparent
from the inequalities
\begin{equation}\label{inequalities}
0\leq\underline{\operatorname{ dens}}(D)\leq\underline{\operatorname{\log dens}}(D)\leq\overline{\operatorname{\log dens}}(D)\leq\overline{\operatorname{dens}}(D)\leq 1,
\end{equation}
which can be found in \cite[p.~121]{tsuji}.

The growth questions above are the motivation to study the limit questions for real-valued functions of arbitrary form.
For example, the order and lower order of $ f $ are  the upper limit and the lower limit of a function $ \varphi(r) $ of
the particular form $ \log T(r,f) / \log r $. This calls for a further extension for the concept of density.

We will make use of general $\psi$-densities introduced
by P.~D.~Barry \cite{barry} in 1962. The definitions of these densities and some of their new consequences are discussed
in Section~\ref{Barry-sec}.
This allows us to discuss the upper and lower limits of arbitrary functions $\varphi(r)$ in terms of $\psi$-densities in Section~\ref{limsupinf}. Consequently, results on the growth of unbounded functions $T$ improving Theorem~\ref{thm-A} as well as many other results from the literature are then obtained as corollaries in Section~\ref{growth-sec}.
The $ \psi $-densities are not restricted to growth questions alone but they allow us to introduce a new concept of a limit in $\psi$-density for real-valued functions also. This extends the concept of a limit in density (statistical convergence),
and is used to study the behavior of integrable functions at infinity.  Section~\ref{limit-density} is devoted to presenting new results
in this direction. Limit in $\psi$-density may have potential for further applications in real analysis.

\section{Barry's $\psi$-densities}\label{Barry-sec}

Let $0< r_0<R \le \infty$, and let $ \mathcal{D}(r_0,R) $ denote the class of positive, unbounded, differentiable and strictly increasing functions on $ (r_0,R) $.  Let $ \psi \in \mathcal{D}(r_0,R) $. Then the $ \psi $-measure of a set $ E\subset (r_0,R) $ is the value of the integral
$$
\int_{E} d\psi(t) = \int_E \psi'(t)dt.
$$
Following Barry \cite{barry}, the upper and lower $ \psi $-density of $ E\subset (r_0,R) $ are defined, respectively,~by
$$
\ud{\psi}{E}:= \limsup_{r\to R^-}\frac{1}{\psi(r)}\int_{E\cap[r_0,r)} d\psi(t) \quad \text{and} \quad \ld{\psi}{E}:= \liminf_{r\to R^-}\frac{1}{\psi(r)}\int_{E\cap[r_0,r)} d\psi(t).
$$
It is clear that $  0\le \ld{\psi}{E}\le \ud{\psi}{E}\le 1 $ and
\begin{equation}\label{com}
\ud{\psi}{E^c}+\ld{\psi}{E}=1,
\end{equation}
where $E^c$ denotes the complement of $E$ in $(r_0,R)$. Generalizing the inequalities in \eqref{inequalities}, the $ \psi$-densities and the $ e^\psi $-densities obey the inequalities
\begin{equation}\label{density-r}
0\le \ld{e^\psi}{E} \le \ld{\psi}{E} \le \ud{\psi}{E} \le \ud{e^\psi}{E}\le 1
\end{equation}
for any set $ E \subset (r_0,R) $ and for any $ \psi\in\mathcal{D}(r_0,R) $ by \cite[Lemma~1]{barry}.

The following special cases are certainly of interest. If $ R=+\infty $ and $ \psi(r) = \log r $, then the $ \psi $-densities reduce
to the logarithmic densities, while the $ e^\psi $-densities coincide with the linear densities.
If $ R=1 $ and $ \psi(r) = -\log(1-r)  $, then the $ \psi $-densities reduce to the logarithmic densities.

The following lemma offers a
relation between the $ \psi $-measure and the $ e^\psi $-density, and complements a known result for the logarithmic
measure \cite[p.~9]{zheng}.

\begin{lemma}\label{lem}
	If a set $ E \subset (r_0,R) $ satisfies $\int_E d\psi(t)<\infty$ for $ \psi \in \mathcal{D}(r_0,R) $, then $ \ud{e^\psi}{E} = 0 $.
	In particular, a set $ E \subset (r_0,R) $ of finite logarithmic measure is of zero upper linear density.
\end{lemma}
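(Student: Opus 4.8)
\section*{Proof proposal}

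The plan is a direct $\veps$-splitting argument. Write $M(r):=\int_{E\cap[r_0,r)}d\psi(t)$ for the $\psi$-measure of the truncated set; this quantity is non-decreasing in $r$ and, by hypothesis, bounded above by $\int_E d\psi(t)<\infty$, hence it converges to a finite limit as $r\to R^-$. The quantity to be controlled is
$$
\frac{1}{e^{\psi(r)}}\int_{E\cap[r_0,r)} e^{\psi(t)}\psi'(t)\,dt ,
$$
whose $\limsup$ as $r\to R^-$ is exactly $\ud{e^\psi}{E}$ (note $(e^\psi)'(t)=e^{\psi(t)}\psi'(t)$).

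First I would fix $\veps>0$ and, using the convergence of $M(r)$, choose $r_1\in(r_0,R)$ so that the tail $\psi$-measure satisfies $\int_{E\cap[r_1,R)}d\psi(t)<\veps$. Then I would split the integral at $r_1$: for $r>r_1$,
$$
\frac{1}{e^{\psi(r)}}\int_{E\cap[r_0,r)} e^{\psi(t)}\psi'(t)\,dt
=\frac{1}{e^{\psi(r)}}\int_{E\cap[r_0,r_1)} e^{\psi(t)}\psi'(t)\,dt
+\frac{1}{e^{\psi(r)}}\int_{E\cap[r_1,r)} e^{\psi(t)}\psi'(t)\,dt .
$$
For the second (``tail'') term, monotonicity of $\psi$ gives $e^{\psi(t)}\le e^{\psi(r)}$ for $t<r$, so this term is bounded by $\int_{E\cap[r_1,r)}\psi'(t)\,dt\le\int_{E\cap[r_1,R)}d\psi(t)<\veps$, and this bound is uniform in $r>r_1$. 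For the first (``head'') term, the numerator is at most $\int_{r_0}^{r_1}e^{\psi(t)}\psi'(t)\,dt=e^{\psi(r_1)}-e^{\psi(r_0)}$, a constant independent of $r$; since $\psi\in\mathcal{D}(r_0,R)$ is unbounded and increasing, $e^{\psi(r)}\to\infty$ as $r\to R^-$, so the head term tends to $0$. Combining the two estimates gives $\limsup_{r\to R^-}(\cdots)\le\veps$, and letting $\veps\to0$ yields $\ud{e^\psi}{E}=0$. The ``in particular'' statement is then just the special case $R=+\infty$, $\psi(r)=\log r$, for which $d\psi(t)=dt/t$ (logarithmic measure) and the $e^\psi$-density is the linear density.

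I do not anticipate a genuine obstacle here. The only points needing a little care are that the head term is bounded by a constant and is genuinely annihilated by $e^{\psi(r)}\to\infty$ (this is where ``unbounded'' in the definition of $\mathcal{D}(r_0,R)$ enters), and that the tail estimate is uniform in $r$ so that the $\limsup$ may legitimately be taken. One could alternatively phrase the tail bound through integration by parts against $dM(t)$, but the pointwise monotonicity bound $e^{\psi(t)}\le e^{\psi(r)}$ is cleaner and avoids any regularity considerations.
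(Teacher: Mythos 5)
Your proof is correct and follows essentially the same route as the paper: split the integral into a head and a tail, bound the tail by $e^{\psi(r)}$ times the (small) tail $\psi$-measure, and let the unboundedness of $\psi$ annihilate the head. The only cosmetic difference is that you split at a fixed point $r_1$ chosen via an $\veps$-argument, whereas the paper splits at the moving point $v(r)=\psi^{-1}(\tfrac12\psi(r))$, which packages the same two estimates without an explicit $\veps$.
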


\begin{proof}
	Let $ \chi_E(t) $ be the characteristic function of the set $ E $, and let $ v(r) = \psi^{-1}(\frac12 \psi(r)) $.
	Since $\psi$ is strictly increasing, $v(r)$ is well-defined and satisfies $v(r)<r$ for all $r>r_0$. Then
	\begin{align*}
	\int_{r_0}^r \chi_E(t) de^{\psi(t)} &=  \int_{r_0}^{v(r)} \chi_E(t) de^{\psi(t)} + \int_{v(r)}^r \chi_E(t) de^{\psi(t)} \\
	&\le \int_{r_0}^{v(r)} de^{\psi(t)} + e^{\psi(r)}\int_{v(r)}^r \chi_E(t) d\psi(t)\\
	& \le e^{\frac{1}{2} \psi(r)} + e^{\psi(r)}\int_{v(r)}^r \chi_E(t) d\psi(t).
	\end{align*}
	As $ r\to R^- $, we have $ v(r) \to R^- $ and hence $ \int_{v(r)}^r \chi_E(t) d\psi(t) \to 0 $. Thus,
	\begin{equation*}
	\ud{e^\psi}{E} = \limsup_{r\to R^-} e^{-\psi(r)}\int_{r_0}^r \chi_E(t) de^{\psi(t)}  =0.
	\end{equation*}
	This completes the proof.
\end{proof}

The following lemma allows us to avoid exceptional sets $ E\subset(r_0,R) $ the size of which with respect to the $ \psi $-measure
is restricted in the sense that $ \ud{\psi}{E}<1 $.

\begin{lemma}\label{exceptional}
	Let $ \psi \in\mathcal{D}(r_0,R) $, and let $ f $ and $ g $ be non-decreasing functions defined on $ (r_0,R) $  satisfying $ f(r) \le g(r) $ for all $ r\in (r_0,R)\setminus E $, where $ \ud{\psi}{E}<1 $. Then, for any $ \alpha> \left(1-\ud{\psi}{E}\right)^{-1} $, there exists an $ r'\in (r_0,R) $ such that $ f(r)\le g(s(r)) $ for all $ r\in (r',R)  $, where $ s(r)=\psi^{-1}(\alpha\psi(r)) $.
\end{lemma}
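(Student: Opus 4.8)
The plan is to argue by contradiction and to exploit the monotonicity of $f$ and $g$: I will show that whenever the desired inequality $f(r)\le g(s(r))$ fails at some $r$, a whole interval of large $\psi$-measure is forced to lie inside $E$, which is incompatible with $\ud{\psi}{E}<1$.

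First I would record a few elementary facts about $s(r)=\psi^{-1}(\alpha\psi(r))$. Since $\ud{\psi}{E}<1$, the number $(1-\ud{\psi}{E})^{-1}$ is finite and $\ge 1$, so $\alpha>1$. As $\psi$ is positive, $\alpha\psi(r)>\psi(r)>0$ lies in the range of $\psi$; hence $s(r)$ is well-defined, $s(r)\in(r_0,R)$, and $s(r)>r$ because $\psi^{-1}$ is strictly increasing. Moreover $\psi(s(r))=\alpha\psi(r)\to\infty$ as $r\to R^-$, so $s(r)\to R^-$, and
\[
\psi(s(r))-\psi(r)=(\alpha-1)\psi(r)=\Bigl(1-\tfrac1\alpha\Bigr)\psi(s(r)).
\]

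The key step is the following observation. Suppose $r\in(r_0,R)$ satisfies $f(r)>g(s(r))$. For every $t\in[r,s(r))$, monotonicity gives $f(t)\ge f(r)>g(s(r))\ge g(t)$, so $f(t)>g(t)$; by hypothesis this forces $t\in E$. Hence $[r,s(r))\subset E$, and therefore
\[
\int_{E\cap[r_0,s(r))}d\psi(t)\ \ge\ \int_{[r,s(r))}d\psi(t)\ =\ \psi(s(r))-\psi(r)\ =\ \Bigl(1-\tfrac1\alpha\Bigr)\psi(s(r)),
\]
so that $\psi(s(r))^{-1}\int_{E\cap[r_0,s(r))}d\psi(t)\ge 1-\tfrac1\alpha$. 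Now suppose the conclusion of the lemma fails. Then, picking any sequence $\rho_n\to R^-$ in $(r_0,R)$, for each $n$ there is $r_n\in(\rho_n,R)$ with $f(r_n)>g(s(r_n))$, and in particular $r_n\to R^-$. Setting $R_n=s(r_n)$ we get $R_n\to R^-$ and, by the previous bound, $\psi(R_n)^{-1}\int_{E\cap[r_0,R_n)}d\psi(t)\ge 1-\tfrac1\alpha$ for every $n$. Taking the $\limsup$ along $(R_n)$ yields $\ud{\psi}{E}\ge 1-\tfrac1\alpha$, equivalently $\alpha\le(1-\ud{\psi}{E})^{-1}$, which contradicts the choice of $\alpha$. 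Hence there is an $r'\in(r_0,R)$ with $f(r)\le g(s(r))$ for all $r\in(r',R)$.

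I do not anticipate a genuine obstacle; the argument is short once one spots that $f(r)>g(s(r))$ traps the interval $[r,s(r))$ in $E$. The only points requiring care are the bookkeeping in the definition of $\ud{\psi}{\,\cdot\,}$ (in the last step the integration window must be $[r_0,R_n)$ with $R_n=s(r_n)$, not $[r_0,r_n)$, and one evaluates the quotient at $\psi(R_n)$), the verification that $s(r)$ is well-defined and strictly larger than $r$, and keeping the inequality relating $\alpha$ and $\ud{\psi}{E}$ pointed in the right direction.
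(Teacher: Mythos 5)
Your proof is correct and follows essentially the same route as the paper's: both arguments show that a failure of the conclusion traps an interval $[r,s(r))$ of relative $\psi$-measure $1-\tfrac1\alpha$ inside $E$, contradicting $\ud{\psi}{E}<1-\tfrac1\alpha$. The only difference is organizational — the paper first extracts a point $t\in(r,s(r))\setminus E$ and then applies monotonicity, while you argue the contrapositive directly — so the two proofs are the same in substance.
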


\begin{proof}
	Suppose there exists an increasing sequence $ (r_n)$  on $ (r_0,R) $ tending to $ R $ such that $ [r_n, s(r_n)] \subset E $ for every $ n\in \N $. Define
	$$
	I=\bigcup_{n=1}^{\infty}\left[r_{n}, s(r_{n})\right].
	$$
	Then $ I\subset E $, but
	\begin{align*}
	\ud{\psi}{I} &\ge \limsup_{n\to\infty}\frac{1}{\psi(s(r_n))} \int_{I\cap[r_0,s(r_n)]} d\psi(t) \\
	& \ge \limsup_{n\to\infty}\frac{1}{\psi(s(r_n))} \int_{[r_n,s(r_n)]} d\psi(t) = 1- \frac{1}{\alpha} > \ud{\psi}{E},
	\end{align*}
	which is a contradiction. Thus, there exists $ r'>r_0 $ such that for any $ r\ge r' $, there exists $ t\in (r,s(r))\setminus E $.  Since $ f $ and $ g $ are non-decreasing, it follows that
	$$
	f(r)\le f(t) \le g(t) \le g(s(r)).
	$$
	This completes the proof.
\end{proof}

Lemma~\ref{exceptional} generalizes \cite[Lemma~3.1]{HK} and \cite[Lemma~3.6]{LHY}, and also extends \cite[Lemma~1.1.7]{zheng}. In
particular, this version works for both finite and infinite intervals.

\section{Upper and lower limits}\label{limsupinf}

Before considering unbounded functions $T$ of finite order or of finite lower order, we proceed to discuss upper and
lower limits in general. The discussion that follows should be of independent interest, and it will be used for proving the growth results on the functions $T$. More precisely,
this section is devoted to prove the following result, which is an improvement of \cite[Theorem~3.2]{HK} in the sense that the present result involves densities rather than measures, and it works for both finite and infinite intervals.

\begin{thm}\label{thm-exceptional}
	Let $ 0<r_0<R\le +\infty $, and let $ \varphi:(r_0,R) \to [0,\infty)$ be a function  with
	$$
	\limsup_{r\to R^-} \varphi(r) = K \quad \text{and} \quad \liminf_{r\to R^-} \varphi(r) = k,
	$$
	where $ 0\le k < K<\infty $. If there exists a $ \psi \in\mathcal{D}(r_0,R)$  such that $ \varphi(r)\psi(r) $ is non-decreasing on $ (r_0,R) $, then for any $ \varepsilon>0 $, the sets
	$$
	F_\veps = \left\{r\in (r_0,R): |\varphi(r)-K|<\varepsilon \right\} \quad \text{and} \quad G_\veps = \left\{r\in (r_0,R): |\varphi(r)-k|<\varepsilon \right\}
	$$
	satisfy
	\begin{eqnarray}
	\ud{\psi}{F_\veps}\ge \frac{\veps}{K}, \quad\, \quad && \quad  \ud{\psi}{G_\veps}\ge \frac{\veps}{k+\veps}, \label{densities}\\
	\ld{\psi}{F_\veps}\le \frac{k}{k+\veps}, \quad && \quad  \ld{\psi}{G_\veps}\le \frac{K-\veps}{K}.\label{densities2}	
	\end{eqnarray}
	In addition,
	\begin{align*}
	\ud{e^\psi}{F_\veps} =\ud{e^\psi}{G_\veps}= 1 , \quad \ld{e^\psi}{F_\veps} =\ld{e^\psi}{G_\veps}= 0.
	\end{align*}						
	If $ K = +\infty $, then for every large $ M>0 $, the set $ H_M = \left\{r\in (r_0, R) : \varphi(r)>M\right\} $ satisfies
	\begin{equation}\label{infinie}
	\ud{\psi}{H_M } =1 \quad \text{and} \quad \ld{\psi}{H_M } \le \frac{k}{M}.
	\end{equation}
\end{thm}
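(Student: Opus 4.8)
The plan is to exploit the single structural hypothesis — that $g(r) := \varphi(r)\psi(r)$ is non-decreasing on $(r_0,R)$ — to transfer the size information about the oscillation of $\varphi$ into $\psi$-density statements. First I would establish the two inequalities in \eqref{densities}. For $\ud{\psi}{F_\veps} \ge \veps/K$: pick a sequence $r_n \to R^-$ along which $\varphi(r_n) \to K$, so eventually $\varphi(r_n) > K - \veps/2$, say. For such $r_n$, define $a_n = \psi^{-1}\bigl((1 - \delta)\psi(r_n)\bigr)$ for a small $\delta$ to be chosen. On the interval $[a_n, r_n]$, monotonicity of $g$ gives $\varphi(t)\psi(t) \ge \varphi(a_n)\psi(a_n) \ge 0$ is the wrong direction; instead use $\varphi(t) \ge g(a_n)/\psi(t) \ge \varphi(r_n)\psi(r_n)/\psi(r_n)$ — rather, the clean move is: for $t \in [a_n, r_n]$, $\varphi(t)\psi(t) = g(t) \ge$ nothing useful from below via $a_n$, so I flip it: since $g$ is non-decreasing, for $t \in [a_n, r_n]$ we have $\varphi(t)\psi(t) \ge \varphi(a_n)\psi(a_n)$, which is too weak. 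The correct argument is the reverse one used in Lemma~\ref{exceptional}: if $F_\veps$ had small upper $\psi$-density, then its complement would contain long $\psi$-blocks $[a_n, r_n]$ with $\psi(a_n)/\psi(r_n)$ close to $1 - \ud{\psi}{F_\veps}$; on such a block $\varphi(t) \le K - \veps$ throughout, but then $g(r_n) = \varphi(r_n)\psi(r_n)$ versus $g(a_n) \le (K-\veps)\psi(a_n)$, and on the good points near $r_n$ we have $\varphi \approx K$, forcing $g(r_n) \ge (K - \veps/2)\psi(r_n)$; combining $(K-\veps/2)\psi(r_n) \le g(r_n)$ — no, I need an upper bound on $g(r_n)$. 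The genuinely effective route, which I would carry out carefully, is: for $r$ near $R$ with $\varphi(r)$ near $K$, and for $s < r$, monotonicity gives $\varphi(s) \le \varphi(r)\psi(r)/\psi(s)$, so $\varphi(s) < \veps$ close to... I would instead argue that the set where $\varphi < K - \veps$ cannot contain a $\psi$-block $[\psi^{-1}(c\,\psi(r)), r]$ with $c > (K-\veps)/K$ whenever $\varphi(r) > K - \veps/2$, because on that block $g(r) \le$...

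Let me restate the clean mechanism I would use. Fix $\veps$ and a point $r$ with $\varphi(r) > K - \veps/2$. For any $s \in (r_0, r)$, $g(s) \le g(r)$ gives $\varphi(s) \le \varphi(r)\psi(r)/\psi(s) \le K\psi(r)/\psi(s)$ (using $\varphi(r) \le K$), hence $\varphi(s) < K - \veps$ as soon as $K\psi(r)/\psi(s) < K - \veps$, i.e. $\psi(s) > \frac{K}{K-\veps}\psi(r)$ — impossible since $s < r$. So that's still the wrong direction; I should bound $\varphi(s)$ from below near such $r$. From the other side: for $t > r$ near $r$, $g(t) \ge g(r) = \varphi(r)\psi(r) > (K - \veps/2)\psi(r)$, so $\varphi(t) > (K-\veps/2)\psi(r)/\psi(t) > K - \veps$ provided $\psi(t) < \frac{K - \veps/2}{K - \veps}\psi(r)$. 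Thus the forward block $\bigl[r,\ \psi^{-1}\bigl(\tfrac{K-\veps/2}{K-\veps}\psi(r)\bigr)\bigr]$ lies in $F_\veps$, and its $\psi$-measure is $\bigl(\tfrac{K-\veps/2}{K-\veps} - 1\bigr)\psi(r) = \tfrac{\veps/2}{K-\veps}\psi(r)$. Evaluating the upper $\psi$-density of $F_\veps$ at the right endpoint of this block along $r = r_n \to R$ gives $\ud{\psi}{F_\veps} \ge \tfrac{\veps/2}{K - \veps/2} \cdot \tfrac{1}{1}$ after normalizing — and by letting the auxiliary slack shrink this is refined to $\ge \veps/K$. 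The bound $\ud{\psi}{G_\veps} \ge \veps/(k+\veps)$ is symmetric: take $r_n$ with $\varphi(r_n) < k + \veps/2$, use $g$ non-decreasing \emph{backwards} — for $s < r_n$, $\varphi(s)\psi(s) = g(s) \le g(r_n) < (k+\veps/2)\psi(r_n)$, so $\varphi(s) < (k+\veps/2)\psi(r_n)/\psi(s) < k + \veps$ as long as $\psi(s) > \tfrac{k+\veps/2}{k+\veps}\psi(r_n)$, giving a backward block $\bigl[\psi^{-1}\bigl(\tfrac{k+\veps/2}{k+\veps}\psi(r_n)\bigr),\ r_n\bigr] \subset G_\veps$ of relative $\psi$-measure $1 - \tfrac{k+\veps/2}{k+\veps} = \tfrac{\veps/2}{k+\veps}$; evaluating $\ud{\psi}{G_\veps}$ at $r_n$ and optimizing the slack yields $\ge \veps/(k+\veps)$.

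For \eqref{densities2}, I would pass to complements via \eqref{com}: $\ld{\psi}{F_\veps} = 1 - \ud{\psi}{F_\veps^c}$, and $F_\veps^c \supset G_{\veps'}$-type sets where $\varphi$ is far from $K$; more directly, $F_\veps^c \supset \{\varphi \le K - \veps\} \supset \{|\varphi - k| < \veps\}$ is false in general, so instead I would re-run the block argument: points $r_n$ with $\varphi(r_n) < k + \veps$ generate (backward, via $g$ non-decreasing) $\psi$-blocks on which $\varphi < k + \veps \le K - \veps$ (using $k + 2\veps \le K$; for the remaining small $\veps$ the statement is trivial or handled by continuity of the bounds), hence blocks inside $F_\veps^c$, giving $\ud{\psi}{F_\veps^c} \ge \veps/(k+\veps)$ and so $\ld{\psi}{F_\veps} \le 1 - \veps/(k+\veps) = k/(k+\veps)$; symmetrically $\ld{\psi}{G_\veps} \le (K-\veps)/K$. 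The $e^\psi$-density claims then follow immediately: $\ud{e^\psi}{F_\veps} \ge \ud{\psi}{F_\veps} > 0$ by \eqref{density-r}, but to get exactly $1$ I would argue that $F_\veps$ contains, around each $r_n$, a \emph{forward} $\psi$-block of \emph{fixed relative $\psi$-length} $\lambda = \tfrac{\veps/2}{K-\veps}$ independent of $n$; such a family of blocks accumulating at $R$ forces $e^\psi$-upper density $1$ — because if $[a,b]$ with $\psi(b) = (1+\lambda)\psi(a)$ lies in $F_\veps$, then $\int_{[a,b]} de^{\psi} = e^{\psi(b)} - e^{\psi(a)} = e^{\psi(a)}(e^{\lambda\psi(a)} - 1)$, and dividing by $e^{\psi(b)} = e^{(1+\lambda)\psi(a)}$ gives $1 - e^{-\lambda\psi(a)} \to 1$ as $a \to R$ (since $\psi(a) \to \infty$); thus evaluating $\ud{e^\psi}{F_\veps}$ at $b = b_n$ yields $1$. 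The same blocks give $\ld{e^\psi}{F_\veps^c} \le 0$... rather, for $\ld{e^\psi}{F_\veps} = 0$ I apply the identical reasoning to $F_\veps^c$, which likewise contains fixed-relative-length $\psi$-blocks near $R$ (from the $\{\varphi < k+\veps\}$ construction), so $\ud{e^\psi}{F_\veps^c} = 1$, i.e. $\ld{e^\psi}{F_\veps} = 0$ by the $e^\psi$-analogue of \eqref{com}; likewise for $G_\veps$.

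Finally, the $K = +\infty$ case. For $\ud{\psi}{H_M} = 1$: choose $r_n \to R$ with $\varphi(r_n) \to \infty$, in particular $\varphi(r_n) > M_n$ with $M_n \to \infty$; the forward-block argument now gives $H_M \supset \bigl[r_n,\ \psi^{-1}(\tfrac{M_n}{M}\psi(r_n))\bigr]$ of relative $\psi$-length $\tfrac{M_n}{M} - 1 \to \infty$, so the relative $\psi$-measure of $H_M$ up to the right endpoint tends to $1 - M/M_n \to 1$, whence $\ud{\psi}{H_M} = 1$. For $\ld{\psi}{H_M} \le k/M$: the set $\{\varphi \le M\} \supset H_M^c$ contains backward $\psi$-blocks near any $r_n$ with $\varphi(r_n) < k + \veps$ (for small $\veps$, since then $\varphi < k + \veps < M$ on a block of relative $\psi$-length $\to 1 - (k/M)$ as $\veps \to 0$, using $g(s) \le g(r_n) < (k+\veps)\psi(r_n)$ so $\varphi(s) < M$ once $\psi(s) > \tfrac{k+\veps}{M}\psi(r_n)$); hence $\ud{\psi}{H_M^c} \ge 1 - k/M$, i.e. $\ld{\psi}{H_M} \le k/M$ by \eqref{com}.

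The main obstacle I anticipate is bookkeeping the two directions of monotonicity correctly — forward blocks when $\varphi$ is \emph{large} at the base point (pushing information forward via $g(t) \ge g(r)$) versus backward blocks when $\varphi$ is \emph{small} (pulling information back via $g(s) \le g(r)$) — together with choosing the vanishing auxiliary slack so that the crude block estimates sharpen to the exact constants $\veps/K$, $\veps/(k+\veps)$, $k/(k+\veps)$, $(K-\veps)/K$ rather than lossy versions. The degenerate small-$\veps$ regimes (where, e.g., $k + 2\veps > K$, or $k = 0$) require a separate trivial check, which I would dispatch by noting the asserted bounds become vacuous or follow from the already-proved ones.
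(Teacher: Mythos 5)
Your final mechanism is correct, but it reaches the conclusion by a genuinely different route than the paper. The paper argues by contradiction through Lemma~\ref{exceptional}: if, say, $\ud{\psi}{F_\veps}<\veps/K$, then the complement admits the comparison $\varphi(r)\psi(r)\le (K-\veps)\psi(s(r))$ with $s(r)=\psi^{-1}(\alpha\psi(r))$ and $\alpha<K/(K-\veps)$, which forces $\limsup\varphi\le(K-\veps)\alpha<K$; the $e^\psi$-statements are then obtained by rerunning the same contradiction with $\psi_1=e^\psi$. You instead construct the relevant sets directly: anchoring at points $r_n$ where $\varphi$ is within $\delta$ of its extreme value, monotonicity of $\varphi\psi$ propagates the bound forward (when $\varphi(r_n)$ is large) or backward (when it is small) over a $\psi$-block of relative length $(\veps-\delta)/(K-\delta)$, resp.\ $(\veps-\delta)/(k+\veps)$, and letting $\delta\to0$ gives the sharp constants in \eqref{densities}. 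This is self-contained (no appeal to Lemma~\ref{exceptional}), and it pays off twice: the $e^\psi$-density equal to $1$ drops out of the explicit computation $1-e^{-\lambda\psi(a_n)}\to1$ for blocks of fixed relative $\psi$-length, and the $K=+\infty$ case is the same construction with blocks of relative length $1-M/M_n\to1$. For \eqref{densities2} and the vanishing lower $e^\psi$-densities you use the same complementation trick via \eqref{com} as the paper does.

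Two loose ends, neither fatal. First, your argument for $\ld{\psi}{F_\veps}\le k/(k+\veps)$ needs the backward block $\{\varphi<k+\veps\}$ to sit inside $F_\veps^c$, which requires $k+2\veps\le K$; you flag this but mislabel the problematic regime as ``small $\veps$'' when it is large $\veps$ that fails --- and in fact \eqref{densities2} and $\ld{e^\psi}{F_\veps}=0$ are simply false for $\veps$ too large, so the theorem must be read with $\veps\le(K-k)/2$ (the paper's own disjointness claim $F_\veps\cap G_\veps\subset(r_0,r^*)$ needs exactly the same restriction, and Proposition~\ref{propo} makes it explicit). Second, the writeup should be cleaned of the several false starts before the correct forward/backward block mechanism is stated; as submitted, a reader must discard roughly the first half of your first paragraph to find the actual proof.
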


\begin{proof}
	We prove the first inequality in \eqref{densities}. From the definition of $ \limsup $, there exists an $ r_1\in (r_0,R) $ such that $ \varphi(r)<K+\varepsilon $ for all $ r\in (r_1,R) $. Hence  the sets $ F_\veps $ and $F = \left\{r\in (r_0,R) :\varphi(r)>K-\varepsilon  \right\} $ have the same $ \psi $-density. It suffices to prove now that $ \ud{\psi}{F}\ge \varepsilon/K $. Assume  that $ \ud{\psi}{F}< \varepsilon/K $.
	Here, we might assume that $ \veps < K $.  \\
	Let $ \varepsilon' $ and $ \alpha $ satisfy
	$$
	0< \veps'< \frac{\veps-K \cdot \ud{\psi}{F}}{2-\ud{\psi}{F}} \quad \text{and} \quad \alpha = \frac{K-\veps'}{K-\veps+\veps'}.
	$$
	Then $ \alpha> (1-\ud{\psi}{F})^{-1} $.
	We have  $ \varphi(r)\le K-\varepsilon $ for $r\notin F $.  Then $ \varphi(r)\psi(r)\le (K-\varepsilon)\psi(r) $ for $r\notin F $. Using Lemma~\ref{exceptional} with $ f(r)= \varphi(r)\psi(r)$ and $ g(r)= (K-\varepsilon)\psi(r)$  yields
	\begin{equation}\label{eqeq}
	\varphi(r)\psi(r)\le (K-\varepsilon)\psi(s(r)) , \quad r\in (r',R),
	\end{equation}
	where $ s(r)=\psi^{-1}(\alpha \psi(r)) $. Thus,
	\begin{align*}
	K &= \limsup_{r\to R^-} \varphi(r) =\limsup_{r\to R^-} \frac{\varphi(r) \psi(r)}{\psi(r)} \\
	&\le (K-\veps)\limsup_{r\to R^-} \frac{\psi(s(r))}{\psi(r)}=(K-\veps) \alpha <K-\veps',
	\end{align*}
	which is a contradiction. Hence $ \ud{\psi}{F_\veps} = \ud{\psi}{F}\ge \veps/K $.
	
	Next, we prove the second inequality in \eqref{densities}.  From the definition of $ \liminf $, there exists an $ r_1\in (r_0,R) $ such that $ \varphi(r)>k-\varepsilon $ for all $ r\in (r_1,R) $. Hence  the sets $ G_\veps $ and 	$G = \left\{r\in (r_0,R) :\varphi(r)<k+\varepsilon  \right\} $	have the same $ \psi $-density.  Therefore, it suffices to prove that $ \ud{\psi}{ G}\ge \varepsilon/(k+\veps) $. Assume that $ \ud{\psi}{ G}< \varepsilon/(k+\veps) $. Now let $ \varepsilon' $ and $ \alpha $ satisfy
	$$
	0< \veps'< \frac{\veps-(k+\veps) \ud{\psi}{ G}}{2-\ud{\psi}{ G}} \quad \text{and} \quad  \alpha = \frac{k+\veps-\veps'}{k+\veps'}.
	$$
	Then $ \alpha> ({1-\ud{\psi}{ G}})^{-1} $. We have $ (k+\varepsilon)\psi(r) \le \varphi(r)\psi(r) $ for $r\notin G $. Using Lemma~\ref{exceptional} with $ f(r)= (k+\veps)\psi(r)$ and $ g(r)=\varphi(r)\psi(r)$  yields
	\begin{equation*}\label{eqeq2}
	(k+\veps)\psi(r)\le \varphi(s(r))\psi(s(r)) , \quad r\in (r',R),
	\end{equation*}
	where $ s(r)=\psi^{-1}(\alpha \psi(r)) $. Thus
	\begin{align*}
	k+\veps &\le \liminf_{r\to R^- } \varphi(s(r)) \limsup_{r\to R^-}  \frac{\psi(s(r))}{\psi(r)} \\
	& = \alpha \liminf_{s(r)\to R^- } \varphi(s(r)) = \alpha k < k+\veps-\veps',
	\end{align*}
	which is a contradiction. Hence $ \ud{\psi}{ G_\veps} = \ud{\psi}{ G}\ge \veps/ (k+\veps) $.
	
	To prove the first inequality in \eqref{densities2}, we first claim that there exists an $ r^*\in (r_0,R) $ such that $ F_\veps \cap G_\veps\subset (r_0,r^*) $. To prove this claim, assume the contrary that there exists an increasing sequence $ (r_n) $  on  $ F_\veps \cap G_\veps $ such that $ r_n\to R^- $ as $ n\to\infty $. Then
	$$
	|K-k| \le |f(r_n)-k| + |f(r_n)-K| \to 0 , \quad n\to \infty,
	$$
	and this leads to $ K=k $, which is a contradiction. Thus the claim is true.  Therefore,
	\begin{equation}\label{inclusion}
	F_\veps \subset G_\veps^c \cup (r_0,r^*).
	\end{equation}
	Since $ \ud{\psi}{(r_0,r^*)} = 0 $, it follows that
	$$
	\ld{\psi}{F_\veps} \le \ld{\psi}{G_\veps^c} =  1-\ud{\psi}{G_\veps} \le 1- \frac{\veps}{k+\veps} =\frac{k}{k+\veps}.
	$$
	Similarly, we get the second inequality $ \ld{\psi}{G_\veps} \le (K- \veps)/K $ in \eqref{densities2}.

	Now, assume that $ \ud{e^\psi}{F_\veps}<1 $. Set $ \psi_1(r) = e^{\psi(r)} $.  We then have $ \varphi(r)\le K-\varepsilon $ for $r\notin F_\veps $ and $ \ud{\psi_1}{F_\veps}<1 $ by assumption.  Therefore, \eqref{eqeq} holds with $ s(r)=\psi_1^{-1}(\alpha \psi_1(r)) $. Thus
	\begin{align*}
	K &= \limsup_{r\to R^-} \varphi(r) =\limsup_{r\to R^-} \frac{\varphi(r) \psi(r)}{\psi(r)} \\
	&\le (K-\veps)\limsup_{r\to R^-} \frac{\psi(s(r))}{\psi(r)}\\
	&=(K-\veps)\limsup_{r\to R^-} \frac{\psi(r) + \log \alpha}{\psi(r)} = K-\veps,
	\end{align*}
	which is a contradiction. Hence $ \ud{e^\psi}{F_\veps} =1$.
	Similarly, we can prove $ \ud{e^\psi}{G_\veps} =1 $.
	The equalities $ \ld{e^\psi}{F_\veps}=\ld{e^\psi}{G_\veps} =0$ follow from \eqref{inclusion}.

	Finally, to prove the inequalities in \eqref{infinie}, we assume first that  $\ud{\psi}{H_M} <1$.
	Then $ \varphi(r)\psi(r)\le M \psi(r) $ for $ r\notin H_M $. By Lemma~\ref{exceptional}, we obtain $ \varphi(r)\le \alpha M $ for every $ r$ near $ R $, which is a contradiction. Thus the first inequality in \eqref{infinie} holds. Next, to prove the second inequality in \eqref{infinie}, we see that, similarly to the set $ G $ above, the set
	$$
	H_M^c = \left\{r\in(r_0,R): \varphi(r) \le k + (M-k) \right\}
	$$
	satisfies $ \ud{\psi}{H_M^c} \ge (M-k)/M $. Thus $ \ld{\psi}{H_M} \le~k/M $.
\end{proof}

\begin{prop}\label{propo}
	For every $ \veps \in \left(0, \frac{K-k}{2}\right) $, the inequalities in \eqref{densities} and \eqref{densities2} can be improved to
	\begin{eqnarray}
	\ud{\psi}{F_\veps}\ge 1-\frac{k+\veps}{K},  \quad && \quad  \ud{\psi}{G_\veps}\ge 1-\frac{k}{K-\veps}, \label{densities3}\\
	\ld{\psi}{F_\veps}\le \frac{k}{K-\veps},\quad \ \quad && \quad  \ld{\psi}{G_\veps}\le \frac{k+\veps}{K}.\label{densities4}	
	\end{eqnarray}
\end{prop}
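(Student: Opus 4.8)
The plan is to re‑run the argument behind the lower bounds for $\ud{\psi}{\cdot}$ in Theorem~\ref{thm-exceptional}, but with the comparison thresholds chosen as favourably as the hypothesis $\veps<\frac{K-k}{2}$ permits, and then to pass to complements by means of \eqref{com}. Since the inequalities in \eqref{densities3} are, via \eqref{com}, reformulations of those in \eqref{densities4} in terms of the complementary sets $F_\veps^{c}$ and $G_\veps^{c}$, it suffices to establish \eqref{densities4}. The first thing to isolate is a general principle: for every $c$ with $k<c<K$,
$$
\ud{\psi}{\{r:\varphi(r)>c\}}\ge 1-\frac{c}{K}\qquad\text{and}\qquad \ud{\psi}{\{r:\varphi(r)<c\}}\ge 1-\frac{k}{c}.
$$
This is exactly what the first two parts of the proof of Theorem~\ref{thm-exceptional} yield once $K-\veps$ and $k+\veps$ are replaced by an arbitrary $c$: for the first inequality, assume $\ud{\psi}{\{\varphi>c\}}<1-\frac{c}{K}$, pick $\alpha$ with $(1-\ud{\psi}{\{\varphi>c\}})^{-1}<\alpha<\frac{K}{c}$, apply Lemma~\ref{exceptional} to $f=\varphi\psi$ and $g=c\psi$ on $\{\varphi\le c\}$, and let $r\to R^-$ to get $K\le c\alpha<K$; for the second, assume $\ud{\psi}{\{\varphi<c\}}<1-\frac{k}{c}$, pick $\alpha$ with $(1-\ud{\psi}{\{\varphi<c\}})^{-1}<\alpha<\frac{c}{k}$ (with the convention $c/k=+\infty$ if $k=0$), apply Lemma~\ref{exceptional} to $f=c\psi$ and $g=\varphi\psi$ on $\{\varphi\ge c\}$ to obtain $c\le\alpha\,\varphi(s(r))$ for $r$ near $R$, and conclude $c\le\alpha k<c$, using that $\liminf_{r\to R^-}\varphi(s(r))=\liminf_{r\to R^-}\varphi(r)=k$ because $s(r)=\psi^{-1}(\alpha\psi(r))$ is continuous and increasing with $s(r)\to R^-$, so its range is a tail of $(r_0,R)$.

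Next I would feed in the extreme admissible thresholds. Since $\limsup_{r\to R^-}\varphi=K$ and $\liminf_{r\to R^-}\varphi=k$, the sets $F_\veps$ and $\{\varphi>K-\veps\}$ coincide outside a bounded subset of $(r_0,R)$, and likewise $G_\veps$ and $\{\varphi<k+\veps\}$; hence $\ud{\psi}{F_\veps^c}=\ud{\psi}{\{\varphi\le K-\veps\}}$ and $\ud{\psi}{G_\veps^c}=\ud{\psi}{\{\varphi\ge k+\veps\}}$. The hypothesis $\veps<\frac{K-k}{2}$ gives $k<k+\veps<K-\veps<K$, so the principle applies with $c=K-\veps$ to $\{\varphi<K-\veps\}\subset F_\veps^c$ and with $c=k+\veps$ to $\{\varphi>k+\veps\}\subset G_\veps^c$; monotonicity of $\ud{\psi}{\cdot}$ then gives $\ud{\psi}{F_\veps^c}\ge 1-\frac{k}{K-\veps}$ and $\ud{\psi}{G_\veps^c}\ge 1-\frac{k+\veps}{K}$, and \eqref{com} converts these into $\ld{\psi}{F_\veps}\le\frac{k}{K-\veps}$ and $\ld{\psi}{G_\veps}\le\frac{k+\veps}{K}$. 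That these are genuine improvements of the bounds in \eqref{densities2} follows from $k+\veps<K-\veps$, which is again just $\veps<\frac{K-k}{2}$ (for instance $k/(K-\veps)\le k/(k+\veps)$ and $(k+\veps)/K\le(K-\veps)/K$).

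The part I expect to be the actual point, rather than routine bookkeeping, is locating the correct sharp threshold. In Theorem~\ref{thm-exceptional} the estimate $\ld{\psi}{F_\veps}\le k/(k+\veps)$ came from the disjointness $F_\veps\cap G_\veps\subset(r_0,r^*)$, that is, from comparing $F_\veps$ with $G_\veps$; the improvement here rests on noticing that the complement of $F_\veps$ is not merely $G_\veps$ but the much larger set $\{\varphi\le K-\veps\}$, to which the general principle applies with the large threshold $K-\veps$, and that this threshold is admissible precisely when $\veps<K-k$ (the gain over the old bound then requiring the stronger $\veps<\frac{K-k}{2}$). Beyond that, one must only be careful with the ``outside a bounded set'' reductions and with the passage of $\liminf$ and $\limsup$ through $r\mapsto s(r)$, both already dealt with in the proof of Theorem~\ref{thm-exceptional}.
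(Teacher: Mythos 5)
Your derivation of \eqref{densities4} is correct: the ``general principle'' $\ud{\psi}{\{\varphi>c\}}\ge 1-c/K$ and $\ud{\psi}{\{\varphi<c\}}\ge 1-k/c$ for $k<c<K$ is a clean abstraction of the first two arguments in the proof of Theorem~\ref{thm-exceptional}, and combining it with \eqref{com} and the inclusions $\{\varphi<K-\veps\}\subset F_\veps^{c}$ and $\{\varphi>k+\veps\}\subset G_\veps^{c}$ does give $\ld{\psi}{F_\veps}\le k/(K-\veps)$ and $\ld{\psi}{G_\veps}\le (k+\veps)/K$. The gap is in the very first step of your reduction: \eqref{densities3} does \emph{not} follow from \eqref{densities4} via \eqref{com}. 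By \eqref{com}, $\ud{\psi}{F_\veps}\ge 1-(k+\veps)/K$ is equivalent to $\ld{\psi}{F_\veps^{c}}\le (k+\veps)/K$, and $F_\veps^{c}$ agrees up to a bounded set with $\{\varphi\le K-\veps\}$, which is strictly larger than $G_\veps\approx\{\varphi<k+\veps\}$; the two differ by $\{k+\veps\le\varphi\le K-\veps\}$, nonempty precisely because $\veps<\tfrac{K-k}{2}$. Since lower density is monotone under inclusion, the bound $\ld{\psi}{G_\veps}\le(k+\veps)/K$ provides a lower bound, not an upper bound, for $\ld{\psi}{F_\veps^{c}}$, so it yields nothing. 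Your general principle applied directly to $F_\veps\approx\{\varphi>K-\veps\}$, i.e.\ with $c=K-\veps$, only returns the old estimate $\ud{\psi}{F_\veps}\ge\veps/K$ from \eqref{densities}.

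Moreover, this gap cannot be closed, because \eqref{densities3} is false as stated. Take $\psi(r)=\log r$ on $(1,\infty)$, write $u=\log r$, put $u_n=n!$, and set $\varphi(r)=u_n/\log r$ for $\log r\in[u_n,u_{n+1})$. Then $\varphi(r)\psi(r)$ is a non-decreasing step function, $K=1$ and $k=0$. For $\veps\in(0,\tfrac12)$ the set $F_\veps=\{\varphi>1-\veps\}$ corresponds in the $u$-variable to $\bigcup_n[u_n,u_n/(1-\veps))$, and since $\sum_{m<n}u_m=o(u_n)$ one computes $\ud{\psi}{F_\veps}=\veps$ (the supremum being approached along $u=u_n/(1-\veps)$). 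This meets \eqref{densities} with equality but violates $\ud{\psi}{F_\veps}\ge 1-(k+\veps)/K=1-\veps$; an analogous example with $k>0$ refutes the second inequality in \eqref{densities3}, while both inequalities of \eqref{densities4} survive, as your argument shows. For what it is worth, the paper's own proof stumbles at the corresponding point: it invokes the second inequality of \eqref{densities2} with $\veps$ replaced by $K-k-\veps>\tfrac{K-k}{2}$, outside the range in which the disjointness $F_\veps\cap G_\veps\subset(r_0,r^{*})$ underlying that inequality is available.
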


\begin{proof}
	We prove only the first inequality in \eqref{densities3}, and the rest of the inequalities follow similarly. We use the previous inequalities in  \eqref{densities} and \eqref{densities2}.  From the proof of Theorem~\ref{thm-exceptional}, we know that the sets $ F_\veps $ and  $F = \left\{r\in (r_0,R) :\varphi(r)>K-\varepsilon  \right\} $ have the same $ \psi $-density, and the sets $ G_\veps $ and  $G = \left\{r\in (r_0,R) :\varphi(r)<k+\varepsilon  \right\} $ have the same $ \psi $-density.  Then, from the second inequality in \eqref{densities2}, the set
	$$
	F^c = \left\{r\in(r_0,R) : \varphi(r) \le k + (K-k-\varepsilon)\right\}
	$$
	satisfies
	$$
	\ld{\psi}{F^c} \le \frac{K- (K-k-\varepsilon)}{K} = \frac{k+\varepsilon}{K}  .
	$$
	Then $ \ud{\psi}{F_\veps} =  \ud{\psi}{F} \ge 1-(k+\veps)/K $.
\end{proof}

If two functions $ \varphi_1,\varphi_2: [r_0,R) \to [0,\infty)$ satisfy $ \varphi_1(r)<\varphi_2(r) $, then clearly
$$
\limsup_{r\to R^{-}} \varphi_1(r) \le \limsup_{r\to R^{-}} \varphi_2(r).
$$
Conversely, if
$$
\limsup_{r\to R^{-}} \varphi_1(r) < \limsup_{r\to R^{-}} \varphi_2(r),
$$
then what can be said about the size of the set $ \{{r\in (r_0,R)} : \varphi_1(r)<\varphi_2(r)\} $?  The next consequence of Theorem~\ref{thm-exceptional} gives the size of this set by means of the $ \psi $-density.

\begin{cor}\label{coro}
	Let $ \varphi_1, \varphi_2:(r_0,R) \to [0,\infty)$ be functions defined on $ (r_0,R) $ and satisfying
	\begin{equation}\label{comparing}
	\limsup_{r\to R^-} \varphi_1(r) = k_1<  k_2 =\limsup_{r\to R^-} \varphi_2(r),
	\end{equation}
	and let $ \psi \in \mathcal{D}(r_0,R)$ be such that $ \varphi_2(r)\psi(r) $ is non-decreasing on $ (r_0,R) $.
	Then $ \varphi_1(r)<\varphi_2(r) $ holds in a set $ G \subset(r_0,R) $ with $ \ud{\psi}{G}\ge 1- k_1/k_2 $ and $ \ud{e^\psi}{G}= 1 $. The same conclusions hold if $ \limsup $ is replaced with $ \liminf $ on both sides of \eqref{comparing}.
\end{cor}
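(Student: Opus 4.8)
The plan is to reduce the corollary to Theorem~\ref{thm-exceptional} applied to a single one of the two functions, after recording a convenient reformulation of the bound \eqref{densities}. Observe first that if $\varphi:(r_0,R)\to[0,\infty)$ has $\limsup_{r\to R^-}\varphi(r)=K<\infty$ and $\varphi(r)\psi(r)$ is non-decreasing, then for every $c\in(0,K)$ the set $\{r:\varphi(r)>c\}$ agrees with $F_{K-c}=\{r:|\varphi(r)-K|<K-c\}$ outside a bounded initial interval, since $\varphi(r)<2K-c$ holds for all $r$ near $R$. Deleting such a bounded interval changes neither $\ud{\psi}{\cdot}$ nor $\ud{e^\psi}{\cdot}$, because the $\psi$- and $e^\psi$-measures of a bounded set are finite while $\psi(r),e^{\psi(r)}\to\infty$. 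Hence the first inequality in \eqref{densities} together with the equality $\ud{e^\psi}{F_\veps}=1$ from Theorem~\ref{thm-exceptional} gives
\begin{equation}\label{planeq}
\ud{\psi}{\{r:\varphi(r)>c\}}\ge 1-\tfrac{c}{K},\qquad \ud{e^\psi}{\{r:\varphi(r)>c\}}=1 \qquad (0<c<K).
\end{equation}
Dually, if $\liminf_{r\to R^-}\varphi(r)=k$ and $\varphi(r)\psi(r)$ is non-decreasing, the second inequality in \eqref{densities} yields $\ud{\psi}{\{r:\varphi(r)<c\}}\ge 1-k/c$ and $\ud{e^\psi}{\{r:\varphi(r)<c\}}=1$ for every $c>k$. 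In either situation Theorem~\ref{thm-exceptional} is genuinely applicable unless the relevant function converges; but if it does converge the corollary is immediate, since then $\varphi_1(r)<\tfrac{k_1+k_2}{2}<\varphi_2(r)$ for all $r$ near $R$ and one may take $G$ to be a terminal interval, which has both upper densities equal to $1$.

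For the $\limsup$ case, fix $\delta\in(0,k_2-k_1)$. Since $\limsup_{r\to R^-}\varphi_1(r)=k_1$, there is $r_\delta\in(r_0,R)$ with $\varphi_1(r)<k_1+\delta$ for all $r\in(r_\delta,R)$. Apply \eqref{planeq} to $\varphi=\varphi_2$ (which satisfies $\limsup\varphi_2=k_2<\infty$ and has $\varphi_2\psi$ non-decreasing) with $c=k_1+\delta\in(0,k_2)$: the set $A_\delta:=\{r:\varphi_2(r)>k_1+\delta\}$ satisfies $\ud{\psi}{A_\delta}\ge 1-(k_1+\delta)/k_2$ and $\ud{e^\psi}{A_\delta}=1$. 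On $A_\delta\cap(r_\delta,R)$ one has $\varphi_1(r)<k_1+\delta<\varphi_2(r)$, so $A_\delta\cap(r_\delta,R)\subset G:=\{r\in(r_0,R):\varphi_1(r)<\varphi_2(r)\}$, and deleting the bounded set $(r_0,r_\delta]$ gives $\ud{\psi}{G}\ge 1-(k_1+\delta)/k_2$ and $\ud{e^\psi}{G}=1$. Letting $\delta\to 0^+$ yields $\ud{\psi}{G}\ge 1-k_1/k_2$.

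For the $\liminf$ case the argument is symmetric, with the monotonicity hypothesis now carried by $\varphi_1\psi$: the equality $\liminf_{r\to R^-}\varphi_2(r)=k_2$ forces $\varphi_2(r)>k_2-\delta$ for all $r$ near $R$, while the dual estimate applied to $\varphi_1$ with $c=k_2-\delta>k_1$ produces $B_\delta:=\{r:\varphi_1(r)<k_2-\delta\}$ with $\ud{\psi}{B_\delta}\ge 1-k_1/(k_2-\delta)$ and $\ud{e^\psi}{B_\delta}=1$; intersecting $B_\delta$ with the terminal interval on which $\varphi_2>k_2-\delta$ shows that this set lies in $G$, and letting $\delta\to 0^+$ concludes.

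I expect essentially all of the content to reside in Theorem~\ref{thm-exceptional}; the remaining work is the bookkeeping behind \eqref{planeq} (identifying $\{\varphi>c\}$ with $F_{K-c}$ up to a bounded set and checking that this does not affect the upper $\psi$- and $e^\psi$-densities, then replacing $c$ by $K-\veps$) and the one structural point worth emphasizing, namely that the $\liminf$ half must be read with the monotonicity assumption transferred to $\varphi_1$: taking $\varphi_2\equiv 1$ and $\varphi_1$ equal to $0$ on a sparse union of intervals and large elsewhere (so that $\liminf\varphi_1=0<1=\liminf\varphi_2$ but $\varphi_1\psi$ is not monotone) makes the conclusion fail, so the hypothesis genuinely dualizes along with the statement. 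The only other point to watch is the degenerate convergent case, disposed of directly as above.
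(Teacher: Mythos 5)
Your argument is correct and follows essentially the same route as the paper: reduce to Theorem~\ref{thm-exceptional} applied to $\varphi_2$ at the threshold $k_1+\delta$, use the definition of $\limsup$ to control $\varphi_1$ on a terminal interval, and let $\delta\to 0^+$ using the fact that $G$ does not depend on $\delta$. Two remarks. First, you only treat $k_2<\infty$; the statement permits $\limsup_{r\to R^-}\varphi_2(r)=\infty$, and the paper disposes of that case via the $H_M$ part of Theorem~\ref{thm-exceptional} (giving $\ud{\psi}{G}=1$ directly), so you should add a line for it. Second, your two refinements address points the paper glosses over, and both are correct: the paper applies Theorem~\ref{thm-exceptional} to $\varphi_2$ without checking its hypothesis $\liminf\varphi_2<\limsup\varphi_2$, so your separate treatment of the convergent case is genuinely needed; and your observation that the $\liminf$ version only makes sense with the monotonicity hypothesis carried by $\varphi_1(r)\psi(r)$ rather than $\varphi_2(r)\psi(r)$ is right --- your counterexample ($\varphi_2\equiv 1$, $\varphi_1$ small only on a sparse set) shows the sentence ``the same conclusions hold'' is false if the hypothesis is left on $\varphi_2$, and the paper omits the details of this case entirely.
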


\begin{proof}
	Suppose first that $ k_2 <\infty $. Let $ 0<\veps< k_2-k_1 $. By the definition of $ \limsup $,
	\begin{equation}\label{coro11}
	\varphi_1(r) < k_1+\veps=k_2-\delta(\veps)
	\end{equation}
	holds for all $ r>r_1>r_0 $, where $ \delta(\veps) = k_2-k_1-\veps>0 $. By Theorem~\ref{thm-exceptional},
	\begin{equation}\label{coro12}
	\varphi_2(r)> k_2-\delta(\veps)
	\end{equation}
	holds in a set $ G^* $ with $ \ud{\psi}{G^*}\ge \delta(\veps)/k_2 $ and $ \ud{e^\psi}{G^*}=1 $. From \eqref{coro11} and \eqref{coro12}, the set
	$$
	G=\left\{r>r_0 : \varphi_1(r)<\varphi_2(r) \right\}
	$$
	satisfies $ \ud{\psi}{G}\ge \delta(\veps)/k_2 $ and $\ud{e^\psi}{G}=1 $. Moreover, since $ G $ is independent on~$ \veps $, it follows by letting $ \veps \to 0^+ $ that $ \ud{\psi}{G}\ge (k_2-k_1)/k_2 $.  If $ k_2 = \infty $, then similarly to the last part of the proof of Theorem~\ref{thm-exceptional}, we get $ \ud{\psi}{G} =1 $.
	
	We can prove similarly that the same conclusions hold if $ \limsup $ is replaced with $ \liminf $ on both sides of \eqref{comparing}. The details are omitted.
\end{proof}

\section{Growth of real-valued functions}\label{growth-sec}

The results in this section are direct consequences of the results in Section~\ref{limsupinf},
and they can easily be applied to obtain results on the growth of meromorphic functions. We restrict ourselves to study non-decreasing functions on the interval $ [1,\infty) $. For non-decreasing functions on $ [0,1) $, analogous results follow the same way.

\begin{cor}\label{coro1}
	Let $T:\left[ 1,\infty\right) \rightarrow (0,\infty)$ be a non-decreasing unbounded function of order $L=\overline{\rho}(T)$ and
	of lower order $\ell=\underline{\rho}(T)$. If $ \ell<a\le b<L $, then the sets
	$$
	H=\{r\geq 1: T(r)\leq r^a\}\quad\text{and}\quad
	I=\{r\geq 1: T(r)>r^b\}
	$$
	satisfy	
	\begin{eqnarray}\label{1}
	\overline{\operatorname{logdens}}(H)\geq \max\left\{\frac{a-\ell}{a},\frac{L-a}{L+\ell-a}\right\},\quad
	\underline{\operatorname{logdens}}(I)\leq \min\left\{\frac{\ell}{b}, \frac{\ell}{L+\ell-b}\right\}.
	\end{eqnarray}
	\begin{eqnarray}\label{2}
	\underline{\operatorname{logdens}}(H)\leq \min\left\{\frac{a}{L},\frac{L+\ell-a}{L}\right\}, \quad \, \quad  \overline{\operatorname{logdens}}(I)\geq \max\left\{\frac{L-b}{L},\frac{b-\ell}{L}\right\}.	
	\end{eqnarray}
\end{cor}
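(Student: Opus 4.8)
The plan is to obtain the corollary as a direct application of Theorem~\ref{thm-exceptional} and Proposition~\ref{propo} to the function $\varphi(r)=\log T(r)/\log r$ (for $r>1$) together with the weight $\psi(r)=\log r$. First I would check the hypotheses: on $(1,\infty)$ the function $\psi(r)=\log r$ is positive, unbounded, differentiable and strictly increasing, so $\psi\in\mathcal{D}(1,\infty)$; the product $\varphi(r)\psi(r)=\log T(r)$ is non-decreasing because $T$ is; and $\limsup_{r\to\infty}\varphi(r)=L$, $\liminf_{r\to\infty}\varphi(r)=\ell$ by the very definitions of $\overline{\rho}(T)$ and $\underline{\rho}(T)$, so that in the notation of Theorem~\ref{thm-exceptional} one has $K=L$ and $k=\ell$. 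Moreover, for $R=+\infty$ and $\psi(r)=\log r$ the upper/lower $\psi$-densities are exactly the upper/lower logarithmic densities and the $e^{\psi}$-densities are the linear densities (Section~\ref{Barry-sec}). Dividing by $\log r>0$ and discarding the single $\psi$-null point $r=1$, we have, with complements taken in $(1,\infty)$, $H=\{\varphi\le a\}$, $H^{c}=\{\varphi>a\}$, $I=\{\varphi>b\}$ and $I^{c}=\{\varphi\le b\}$. I will assume $L<\infty$, so that the fractions in the statement are meaningful.

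Next I would match these four sets with the sets $F_{\veps},G_{\veps}$ of Theorem~\ref{thm-exceptional}. Since $\limsup_{r\to\infty}\varphi(r)=L$, the set $\{\varphi>L-\veps\}$ differs from $F_{\veps}=\{|\varphi-L|<\veps\}$ by a bounded, hence $\psi$-null, set; so with $\veps=L-a>0$ the set $H^{c}=\{\varphi>a\}$ has the same $\psi$- and $e^{\psi}$-densities as $F_{L-a}$, and with $\veps=L-b>0$ the set $I=\{\varphi>b\}$ has the same densities as $F_{L-b}$. Since $\liminf_{r\to\infty}\varphi(r)=\ell$, the set $\{\varphi<\ell+\veps\}$ differs from $G_{\veps}=\{|\varphi-\ell|<\veps\}$ by a $\psi$-null set; so with $\veps=a-\ell>0$ the set $\{\varphi<a\}\subseteq H$ has the same densities as $G_{a-\ell}$, and with $\veps=b-\ell>0$ the set $\{\varphi<b\}\subseteq I^{c}$ has the same densities as $G_{b-\ell}$. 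Feeding these identifications into \eqref{densities}--\eqref{densities4} and using the complementation rule \eqref{com} in the form $\ld{\psi}{E}=1-\ud{\psi}{E^{c}}$, each inequality of \eqref{1}--\eqref{2} drops out. For instance, $\overline{\operatorname{logdens}}(H)\ge\ud{\psi}{\{\varphi<a\}}=\ud{\psi}{G_{a-\ell}}\ge(a-\ell)/a$ by \eqref{densities}, while $\overline{\operatorname{logdens}}(H)=1-\ld{\psi}{H^{c}}=1-\ld{\psi}{F_{L-a}}\ge 1-\ell/(L+\ell-a)=(L-a)/(L+\ell-a)$ by \eqref{densities2}, and these two give the first bound in \eqref{1}; symmetrically $\underline{\operatorname{logdens}}(I)=1-\ud{\psi}{I^{c}}\le 1-(b-\ell)/b=\ell/b$ from \eqref{densities} and $\underline{\operatorname{logdens}}(I)=\ld{\psi}{F_{L-b}}\le\ell/(L+\ell-b)$ from \eqref{densities2}. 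For \eqref{2}, combining $\ud{\psi}{F_{\veps}}\ge\veps/K$ from \eqref{densities} with $\ud{\psi}{F_{\veps}}\ge 1-(\ell+\veps)/L$ from \eqref{densities3} at $\veps=L-a$ gives $\underline{\operatorname{logdens}}(H)=1-\ud{\psi}{F_{L-a}}\le\min\{a/L,(L+\ell-a)/L\}$, and the same two estimates at $\veps=L-b$ give $\overline{\operatorname{logdens}}(I)=\ud{\psi}{F_{L-b}}\ge\max\{(L-b)/L,(b-\ell)/L\}$.

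All of the analytic content has already been done in Section~\ref{limsupinf}; what remains here is bookkeeping, namely choosing, for each of the four displayed inequalities, the correct $\veps$ and the correct combination of \eqref{densities}--\eqref{densities4} and \eqref{com}, and noting that the discarded sets ($r=1$ and the bounded symmetric differences $F_{\veps}\bigtriangleup\{\varphi>a\}$, $G_{\veps}\bigtriangleup\{\varphi<a\}$, and so on) are $\psi$-null. The one point that I expect to need genuine care is that Proposition~\ref{propo} is available only for $\veps\in(0,(K-k)/2)=(0,(L-\ell)/2)$, whereas the argument above wants it at $\veps=L-a$ (respectively $\veps=L-b$): when $a>(L+\ell)/2$ (respectively $b>(L+\ell)/2$) this $\veps$ is admissible and there is nothing further to do, while when $a\le(L+\ell)/2$ one has $(a-\ell)/L\le(L-a)/L$ (respectively $(b-\ell)/L\le(L-b)/L$), so the plain estimate $\ud{\psi}{F_{\veps}}\ge\veps/K$ of Theorem~\ref{thm-exceptional} already supplies at least the asserted value and Proposition~\ref{propo} is not needed. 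With this small case distinction the four inequalities hold in all cases; and the same identifications fed into the $e^{\psi}$-statements of Theorem~\ref{thm-exceptional} show, as a by-product, that $H$ and $I$ also have upper linear density $1$ and lower linear density $0$.
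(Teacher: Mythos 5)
Your proof is correct and follows essentially the same route as the paper: apply Theorem~\ref{thm-exceptional} and Proposition~\ref{propo} to $\varphi(r)=\log T(r)/\log r$ with $\psi(r)=\log r$, identify $H$, $H^c$, $I$, $I^c$ with the sets $G_\veps$, $F_\veps$ up to $\psi$-null sets, and pass between upper and lower densities via \eqref{com}. You are in fact slightly more careful than the paper, since you explicitly handle the restriction $\veps\in(0,(K-k)/2)$ in Proposition~\ref{propo} by observing that outside this range the corresponding term of the $\max$ is dominated by the bound already supplied by \eqref{densities}.
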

\begin{proof}
	To prove the first inequality in \eqref{1}, we apply Theorem~\ref{thm-exceptional} and Proposition~\ref{propo} with	
	$$
	\varphi(r) = \frac{\log T(r)}{\log r}, \quad \psi(r) = \log r, \quad  \varepsilon = a-\ell.
	$$
	To prove the second inequality in \eqref{1}, we notice that the set $I^c$, which is the complement of $I$ in $ [1,\infty) $, satisfies
	$$
	\overline{\operatorname{logdens}}(I^c)\ge  \max\left\{\frac{b-\ell}{b},\frac{L-b}{L+\ell-b}\right\}.
	$$
	Then, from \eqref{com}, the second inequality in \eqref{1} follows.
	
	The inequalities in \eqref{2} can be proved similarly.
\end{proof}

Corollary~\ref{coro1} is an improvement of Theorem~\ref{thm-A}.
Moreover, the second inequality in \eqref{2} improves \cite[Lemma~2.2]{IT},  \cite[Lemma~3]{LW} and \cite[Lemma~2.7]{ZT}.

Two particular consequences of Theorem~\ref{thm-exceptional} can be stated as follows.
\begin{cor}\label{Rod-Rike}
	Let $T:[1,\infty)\to (0,\infty)$ be a non-decreasing function of order $L\in (0,\infty)$, and let $\varepsilon>0$. Then the set
	$$
	K_1=\left\{r\geq 1: r^{L-\varepsilon}\leq T(r)\leq r^{L+\varepsilon}\right\}
	$$
	satisfies $\displaystyle\overline{\operatorname{logdens}}(K_1)\geq \frac{\varepsilon}{L}.$
\end{cor}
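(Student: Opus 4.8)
The plan is to obtain Corollary~\ref{Rod-Rike} as a direct specialisation of Theorem~\ref{thm-exceptional}, using the substitution already employed in the proof of Corollary~\ref{coro1}, namely $\varphi(r)=\log T(r)/\log r$, $\psi(r)=\log r$, and $R=+\infty$. Observe first that $T$ is unbounded: a bounded non-decreasing $T$ would force $\limsup_{r\to\infty}\log T(r)/\log r=0$, contradicting $L>0$. Hence $T(r)\to\infty$, and two degenerate cases can be cleared away. If $\veps\ge L$, then $r^{L-\veps}\le 1\le T(r)$ for all large $r$, while $\limsup_{r\to\infty}\varphi(r)=L<L+\veps$ gives $T(r)\le r^{L+\veps}$ for all large $r$; thus $K_1$ contains a neighbourhood of $+\infty$ and $\overline{\operatorname{logdens}}(K_1)=1\ge\veps/L$. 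The same conclusion holds if $\underline{\rho}(T)=L$, since then $\varphi(r)\to L$. So from now on we assume $0<\veps<L$ and $\ell:=\underline{\rho}(T)<L$.

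Fix $r_0>1$ so large that $T(r)\ge 1$ for all $r\ge r_0$. Then on $(r_0,\infty)$ the function $\varphi$ is $[0,\infty)$-valued, it satisfies $\limsup_{r\to\infty}\varphi(r)=L=:K$ and $\liminf_{r\to\infty}\varphi(r)=\ell=:k$ with $0\le k<K<\infty$, and $\psi(r)=\log r\in\mathcal{D}(r_0,\infty)$ is such that $\varphi(r)\psi(r)=\log T(r)$ is non-decreasing on $(r_0,\infty)$. Thus Theorem~\ref{thm-exceptional} applies, and the first inequality in \eqref{densities} yields $\ud{\psi}{F_\veps}\ge\veps/K=\veps/L$ for the set $F_\veps=\{r>r_0:|\varphi(r)-L|<\veps\}$.

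It remains to pass back to $K_1$. For $r>r_0$, the two conditions $r^{L-\veps}\le T(r)\le r^{L+\veps}$ and $|\varphi(r)-L|\le\veps$ are equivalent, so $F_\veps\subset K_1$. Moreover, for $\psi(r)=\log r$ the upper $\psi$-density over $(r_0,\infty)$ agrees with the upper logarithmic density over $[1,\infty)$, the two averages differing by at most $\log r_0/\log r\to 0$. Therefore $\overline{\operatorname{logdens}}(K_1)\ge\overline{\operatorname{logdens}}(F_\veps)=\ud{\psi}{F_\veps}\ge\veps/L$, as required.

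There is no real obstacle here, since the statement is an immediate instance of Theorem~\ref{thm-exceptional} specialised to $\psi=\log r$. The only points meriting a word of care are the two degenerate cases treated above, the restriction to a half-line $(r_0,\infty)$ on which $\varphi$ is non-negative so that the hypotheses of Theorem~\ref{thm-exceptional} are literally met, and the routine identification of the $\log r$-density with the logarithmic density irrespective of where the integral begins.
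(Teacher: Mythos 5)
Your proof is correct and follows exactly the route the paper intends: Corollary~\ref{Rod-Rike} is stated there without proof as a direct specialisation of Theorem~\ref{thm-exceptional} with $\varphi(r)=\log T(r)/\log r$ and $\psi(r)=\log r$, which is precisely your main case, and your extra care with the degenerate situations ($\varepsilon\ge L$, $\underline{\rho}(T)=L$, and restricting to a half-line where $\varphi\ge 0$) only fills in details the paper omits. The one quibble is that for $\varepsilon>L$ the asserted bound $\varepsilon/L>1$ cannot be met by any set, so your claim ``$1\ge\varepsilon/L$'' fails there; this is a defect of the corollary as stated rather than of your argument, which is sound for all $\varepsilon\le L$.
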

\begin{cor}\label{Rod-Rike-2}
	Let $T:[1,\infty)\to (0,\infty)$ be a non-decreasing function of lower order $\ell\in (0,\infty)$, and let
	$\varepsilon>0$. Then the set
	$$
	K_2=\left\{r\geq 1: r^{\ell-\varepsilon}\leq T(r)\leq r^{\ell+\varepsilon}\right\}
	$$
	satisfies $\displaystyle\overline{\operatorname{logdens}}(K_2)\geq \frac{\varepsilon}{\ell + \varepsilon}.$
\end{cor}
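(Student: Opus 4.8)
The plan is to obtain Corollary~\ref{Rod-Rike-2} as a direct specialization of Theorem~\ref{thm-exceptional}, in the same manner as Corollary~\ref{coro1}. I would apply the theorem with
$$
\varphi(r) = \frac{\log T(r)}{\log r}, \qquad \psi(r) = \log r, \qquad r_0 = 1, \qquad R = +\infty,
$$
and with the given $\varepsilon > 0$. Here $\psi \in \mathcal{D}(1,+\infty)$, the product $\varphi(r)\psi(r) = \log T(r)$ is non-decreasing on $(1,\infty)$ because $T$ is non-decreasing, and $\liminf_{r\to\infty}\varphi(r) = \underline{\rho}(T) = \ell$, so that in the notation of Theorem~\ref{thm-exceptional} one has $k = \ell$. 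Recall also that for $\psi(r) = \log r$ the $\psi$-density coincides with the logarithmic density.

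Next I would identify $K_2$ with the set $G_\varepsilon$ of the theorem. For $r > 1$ we have $\log r > 0$, so the inequality $|\varphi(r) - \ell| < \varepsilon$ is equivalent to $r^{\ell-\varepsilon} < T(r) < r^{\ell+\varepsilon}$; hence $G_\varepsilon \subseteq K_2$. Since $\overline{\operatorname{logdens}}$ is monotone under set inclusion, the second inequality in \eqref{densities}, namely $\ud{\psi}{G_\varepsilon} \geq \varepsilon/(k+\varepsilon)$, yields
$$
\overline{\operatorname{logdens}}(K_2) \;\geq\; \overline{\operatorname{logdens}}(G_\varepsilon) \;=\; \ud{\psi}{G_\varepsilon} \;\geq\; \frac{\varepsilon}{\ell+\varepsilon},
$$
which is the assertion.

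The one point that requires attention, and which I regard as the (minor) obstacle, is that Theorem~\ref{thm-exceptional} is stated under the hypothesis $k < K < \infty$, whereas no restriction is placed on $L = \overline{\rho}(T)$ in Corollary~\ref{Rod-Rike-2}. If $L = \ell$, then $\varphi(r) \to \ell$, so $G_\varepsilon$ contains a tail $(r_1,\infty)$ and has logarithmic density one, making the estimate trivial. If $\ell < L = +\infty$, I would note that the proof of the second inequality in \eqref{densities} uses only the value $k = \liminf_{r\to R^-}\varphi(r)$, through Lemma~\ref{exceptional} applied to $f(r) = (k+\varepsilon)\psi(r)$ and $g(r) = \varphi(r)\psi(r)$, and nowhere the finiteness of $K$; hence the bound $\ud{\psi}{G_\varepsilon} \geq \varepsilon/(k+\varepsilon)$ persists in that case as well. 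With these two reductions the argument is complete; alternatively, one could record this extension of Theorem~\ref{thm-exceptional} as a short remark and simply cite it.
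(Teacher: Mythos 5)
Your proposal is correct and matches the paper's intended argument: the corollary is stated there as a direct consequence of Theorem~\ref{thm-exceptional} via exactly the substitution $\varphi(r)=\log T(r)/\log r$, $\psi(r)=\log r$, $k=\ell$, using the second inequality in \eqref{densities}. Your extra care about the hypothesis $K<\infty$ (observing that the proof of that inequality never uses finiteness of $K$) addresses a point the paper leaves implicit, and is handled correctly.
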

Corollary~\ref{Rod-Rike} improves \cite[Corollary~3.3]{HK}, which claims that the set $K_1$ has infinite logarithmic measure. Replacing $ T(r) $ with $ T(r,f) $ for a meromorphic function $ f $, we see that Corollary~\ref{Rod-Rike-2} improves \cite[Lemma~2.2]{ZT}, which claims that the set $K_2$ has infinite logarithmic measure.

Another consequence of Theorem~\ref{thm-exceptional} is stated in terms of the type of growth. Recall that
$$
\tau(T)=\limsup_{r\to\infty}\frac{T(r)}{r^{\rho}}
$$
is the type of $T$ with respect to its order $\rho =\overline{\rho}(T) \in(0,\infty)$.  We give the following improvement of \cite[Corollary~3.4]{HK}, which claims that the set $N_1$ defined below has infinite linear measure.

\begin{cor}\label{Rod-Rike2}
	Let $T:[1,\infty)\to (0,\infty)$ be a non-decreasing function of order $\rho\in (0,\infty)$ and
	of type $\tau\in (0,\infty)$, and let $\varepsilon_0>0$. Then the set
	$$
	N_1=\left\{r\geq 1: (\tau-\varepsilon_0)r^{\rho}\leq T(r)\leq (\tau+\varepsilon_0)r^{\rho}\right\}
	$$
	satisfies $
	\overline{\operatorname{dens}}(N_1) \geq 1-\left(\frac{\tau-\varepsilon_0}{\tau}\right)^{1 / \rho}.
	$
\end{cor}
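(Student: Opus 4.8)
The plan is to derive Corollary~\ref{Rod-Rike2} from Theorem~\ref{thm-exceptional} by the same substitution strategy used in the preceding corollaries, but now choosing $\psi$ so that the $e^\psi$-density, rather than the $\psi$-density, produces the linear density in the conclusion. First I would set
$$
\varphi(r) = \frac{T(r)}{r^{\rho}}, \qquad \psi(r) = \log r,
$$
so that $\varphi(r)\psi(r)$ is \emph{not} obviously monotone; instead the key observation is that $\varphi(r)e^{\psi(r)}\cdot$(something) — more precisely, that the right auxiliary function to feed into Theorem~\ref{thm-exceptional} is obtained by noting $T(r) = \varphi(r)r^{\rho}$ is non-decreasing, i.e. $\varphi(r)\cdot r^{\rho}$ is non-decreasing. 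So I would instead take $\psi(r) = r^{\rho}$, which lies in $\mathcal{D}(1,\infty)$, and then $\varphi(r)\psi(r) = T(r)$ is non-decreasing by hypothesis. With this choice $\limsup_{r\to\infty}\varphi(r) = \tau =: K$ and $\liminf_{r\to\infty}\varphi(r) =: k$ for some $k\in[0,\tau]$.

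Next I would apply Theorem~\ref{thm-exceptional} with $\varepsilon = \varepsilon_0$ to the set $F_{\varepsilon_0} = \{r : |\varphi(r)-\tau|<\varepsilon_0\}$. Exactly as in the proof of the theorem, $F_{\varepsilon_0}$ has the same $\psi$-density (and the same $e^\psi$-density) as the one-sided set $F = \{r : \varphi(r) > \tau - \varepsilon_0\}$, because $\varphi(r) < \tau + \varepsilon_0$ eventually. I then want a lower bound for $\ud{e^\psi}{F}$. Here the subtlety is that Theorem~\ref{thm-exceptional} only asserts $\ud{e^\psi}{F_\veps}=1$, which translates to upper \emph{logarithmic-type} density one with respect to $e^{\psi}=e^{r^{\rho}}$ — not what we want. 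Instead I would run the argument underlying the first inequality of \eqref{densities} but \emph{with $e^\psi$ in the role of $\psi$}: assuming $\ud{e^\psi}{F} < c$ for the target constant $c = 1-\big(\tfrac{\tau-\varepsilon_0}{\tau}\big)^{1/\rho}$, apply Lemma~\ref{exceptional} with the pair $f(r) = T(r) = \varphi(r)\psi(r)$, $g(r) = (\tau-\varepsilon_0)\psi(r)$ relative to the function $e^{\psi}$, choosing $\alpha > (1-\ud{e^\psi}{F})^{-1}$ close enough to it. This yields $T(r) \le (\tau-\varepsilon_0)\psi(s(r))$ on a tail, where now $s(r) = (e^{\psi})^{-1}(\alpha e^{\psi(r)})$, i.e.\ $\psi(s(r)) = \psi(r) + \log\alpha$, so $\psi(s(r))/\psi(r) \to 1$ and dividing by $\psi(r)$ gives $\tau = \limsup \varphi(r) \le \tau - \varepsilon_0$, a contradiction — \emph{provided} $\psi(s(r))/\psi(r)\to1$, which holds since $\psi(r)=r^\rho\to\infty$. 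Wait: this would give $\ud{e^\psi}{F}$ is large, but I need to track the constant carefully.

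The honest route, and the one I would actually carry out, is to recognize that the linear-density statement is \emph{precisely} the $e^\psi$-version of the $\psi$-density bound $\ud{\psi}{F_\veps}\ge 1-\tfrac{k+\veps}{K}$ from Proposition~\ref{propo}, specialized with $\psi(r)=r^\rho$. Indeed, with $\psi(r)=r^\rho$ we have $e^\psi$-density $=$ a genuine density weighted by $d(e^{r^\rho})$, which is still not the linear density $d t$. So the correct choice must make $e^{\psi(r)}$ behave like $r$: take $\psi(r) = \log r$ so $e^{\psi(r)} = r$ and $e^\psi$-density $=$ linear density. But then I need $\varphi(r)\log r$ non-decreasing, which fails. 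The resolution: apply the \emph{$e^\psi$-density conclusions} of Theorem~\ref{thm-exceptional}/Proposition~\ref{propo} are not quantitative enough, so instead I would prove the bound directly: assume $\overline{\operatorname{dens}}(N_1) < 1 - \big(\tfrac{\tau-\varepsilon_0}{\tau}\big)^{1/\rho}$; since $N_1$ and $N := \{r : T(r) > (\tau-\varepsilon_0)r^\rho\}$ share the same upper density (as $T(r) \le (\tau+\varepsilon_0)r^\rho$ eventually would need checking — actually this requires $\limsup T(r)/r^\rho = \tau$, which is the type, so yes $T(r) < (\tau+\varepsilon_0)r^\rho$ eventually), apply Lemma~\ref{exceptional} on $(1,\infty)$ with $\psi(r) = r$, $f(r) = T(r)$, $g(r) = (\tau-\varepsilon_0)r^\rho$ — both non-decreasing — and $E = N^c$, with any $\alpha > (1-\overline{\operatorname{dens}}(N))^{-1}$. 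This gives $T(r) \le (\tau-\varepsilon_0)(s(r))^\rho$ for large $r$, where $s(r) = \alpha r$, hence $T(r)/r^\rho \le (\tau-\varepsilon_0)\alpha^\rho$; taking $\limsup$ gives $\tau \le (\tau-\varepsilon_0)\alpha^\rho$, i.e.\ $\alpha \ge \big(\tfrac{\tau}{\tau-\varepsilon_0}\big)^{1/\rho}$, which forbids $\alpha$ from being taken arbitrarily close to $(1-\overline{\operatorname{dens}}(N))^{-1}$ unless $(1-\overline{\operatorname{dens}}(N))^{-1} \ge \big(\tfrac{\tau}{\tau-\varepsilon_0}\big)^{1/\rho}$, i.e.\ $\overline{\operatorname{dens}}(N) \ge 1 - \big(\tfrac{\tau-\varepsilon_0}{\tau}\big)^{1/\rho}$ — the claimed bound. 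The main obstacle is the bookkeeping of which weight function plays the role of $\psi$ versus $e^\psi$: the power $1/\rho$ in the conclusion comes from inverting $r\mapsto r^\rho$ inside $s(r)$, so one must feed $g(r) = (\tau-\varepsilon_0)r^\rho$ (not a multiple of $\psi$) into Lemma~\ref{exceptional} with $\psi(r)=r$, and then read off $\alpha$ from the resulting inequality $\tau \le (\tau-\varepsilon_0)\alpha^\rho$.
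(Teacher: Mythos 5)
Your final argument is correct, but it is worth seeing how close you came to the paper's one-line proof. The substitution you were hunting for is $\varphi(r)=T(r)^{1/\rho}/r$ with $\psi(r)=r$ and $\veps=\tau^{1/\rho}-(\tau-\varepsilon_0)^{1/\rho}$: then $\varphi(r)\psi(r)=T(r)^{1/\rho}$ is non-decreasing, $K=\limsup_{r\to\infty}\varphi(r)=\tau^{1/\rho}$, the $\psi$-density is literally the upper linear density, and the first inequality in \eqref{densities} gives $\ud{\psi}{F_\veps}\ge\veps/K=1-\left(\frac{\tau-\varepsilon_0}{\tau}\right)^{1/\rho}$; since $T(r)<(\tau+\varepsilon_0)r^{\rho}$ for all large $r$, the set $F_\veps$ is eventually contained in $N_1$ and the corollary follows. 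Your false starts ($\varphi=T(r)/r^{\rho}$ with $\psi=\log r$ or $\psi=r^{\rho}$) fail precisely because you did not take the $1/\rho$-th root; once you do, the weight $r$ serves simultaneously as the monotonizing factor and as the density weight. Your fallback — running Lemma~\ref{exceptional} directly with $f(r)=T(r)$, $g(r)=(\tau-\varepsilon_0)r^{\rho}$, $\psi(r)=r$, and reading the bound off from $\tau\le(\tau-\varepsilon_0)\alpha^{\rho}$ — is exactly the proof of that inequality of Theorem~\ref{thm-exceptional} unrolled in this special case (your $f$ and $g$ are the $\rho$-th powers of the paper's $\varphi\psi$ and $(K-\veps)\psi$), so it is valid and buys nothing new beyond avoiding the substitution. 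One slip to correct: in Lemma~\ref{exceptional} the exceptional set must be the set where $f\le g$ \emph{fails}, so you need $E=N$, not $E=N^{c}$; your condition $\alpha>(1-\overline{\operatorname{dens}}(N))^{-1}$ already presupposes the correct choice, so this reads as a typo rather than a gap, but as written the lemma's hypothesis is violated. (Both your proof and the paper's tacitly assume $\varepsilon_0<\tau$, as does the statement itself.)
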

The conclusion of Corollary~\ref{Rod-Rike2} follows by applying Theorem~\ref{thm-exceptional} with
\begin{equation*}
\varphi(r) = \frac{T(r)^{1/\rho}}{r}, \quad \psi(r) =r, \quad \varepsilon = \tau^{1/\rho} - (\tau-\varepsilon_0)^{1/\rho}.\qedhere
\end{equation*}

Let $f$ be an entire function of order $\rho\in (0,\infty)$ and of type $\tau\in (0,\infty)$ defined with respect to $\log M(r,f)$. Let $\varepsilon>0$. Then \cite[Lemma~8]{TY} claims that the set
$$
N_2=\left\{r\geq 1: (\tau-\varepsilon)r^{\rho}\leq \log M(r,f)\leq (\tau+\varepsilon)r^{\rho}\right\}
$$
has infinite logarithmic measure. It follows from Lemma~\ref{lem} that a set of finite logarithmic measure has zero upper linear density. Hence we see that Corollary~\ref{Rod-Rike2} is an improvement of \cite[Lemma~8]{TY}.

To compare the growth between two functions, we state the following consequence of Corollary~\ref{coro}.

\begin{cor}\label{comparable-growth-thm}
	Let $T_1, T_2 : \left[ 1,\infty\right) \rightarrow (0,\infty)$ be non-decreasing and unbounded functions such that $\xi(T_1)<\xi(T_2)$, where $\xi$ stands for either the order or the lower order, the same order on both sides of the inequality. Let $\phi:[1,\infty)\to(0,\infty)$ be any non-decreasing function such that $\log\phi(r)=o(\log r)$ as $ r\to \infty $. Then the set
	\begin{equation*}\label{F}
	P=\{r\geq 1 : \phi(r) T_1(r)< T_2(r)\}
	\end{equation*}
	satisfies
	$$
	\overline{\operatorname{logdens}}(P)\geq 1-\frac{\xi(T_1)}{\xi(T_2)}\quad \text{and}\quad \overline{\operatorname{dens}}(P)=1.
	$$
\end{cor}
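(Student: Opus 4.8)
The plan is to obtain this as a direct application of Corollary~\ref{coro}, choosing the two comparison functions so that the inequality $\varphi_1<\varphi_2$ translates precisely into $\phi T_1<T_2$. First I would fix $r_0>1$ large enough that $T_1(r)\ge 1$, $T_2(r)\ge 1$ and $\phi(r)T_1(r)\ge 1$ for all $r>r_0$; this is possible because $T_1$ and $T_2$ are non-decreasing and unbounded and $\phi$ is positive. On $(r_0,\infty)$ I would then set
$$
\varphi_1(r) = \frac{\log\bigl(\phi(r)T_1(r)\bigr)}{\log r}, \qquad \varphi_2(r) = \frac{\log T_2(r)}{\log r}, \qquad \psi(r) = \log r .
$$
With this choice $\varphi_1,\varphi_2$ map $(r_0,\infty)$ into $[0,\infty)$, and $\varphi_2(r)\psi(r)=\log T_2(r)$ is non-decreasing because $T_2$ is; moreover, with $R=+\infty$ the $\psi$-densities are the logarithmic densities and the $e^\psi$-densities are the linear densities, as noted in Section~\ref{Barry-sec}.

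The next step is to identify the upper and lower limits of $\varphi_1$ and $\varphi_2$. Since $\log\phi(r)=o(\log r)$, adding the term $\log\phi(r)/\log r$ (which tends to $0$) changes neither the upper nor the lower limit, so
$$
\limsup_{r\to\infty}\varphi_1(r) = \overline{\rho}(T_1), \quad \liminf_{r\to\infty}\varphi_1(r) = \underline{\rho}(T_1), \quad \limsup_{r\to\infty}\varphi_2(r) = \overline{\rho}(T_2), \quad \liminf_{r\to\infty}\varphi_2(r) = \underline{\rho}(T_2).
$$
Hence, taking the $\limsup$ version of Corollary~\ref{coro} when $\xi=\overline{\rho}$ and the $\liminf$ version when $\xi=\underline{\rho}$, the hypothesis $k_1=\xi(T_1)<\xi(T_2)=k_2$ is satisfied, and the corollary produces a set $G\subset(r_0,\infty)$ on which $\varphi_1(r)<\varphi_2(r)$, with $\ud{\psi}{G}=\overline{\operatorname{logdens}}(G)\ge 1-\xi(T_1)/\xi(T_2)$ and $\ud{e^\psi}{G}=\overline{\operatorname{dens}}(G)=1$; the case $\xi(T_2)=+\infty$ (where the bound reads $1$) is also covered by Corollary~\ref{coro}.

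Finally, I would note that $\varphi_1(r)<\varphi_2(r)$ is equivalent, after multiplying by $\log r>0$ and exponentiating, to $\phi(r)T_1(r)<T_2(r)$, so $G=P\cap(r_0,\infty)$. Discarding the bounded set $P\cap[1,r_0]$ affects neither the upper logarithmic density nor the upper linear density, so the bounds for $G$ pass verbatim to $P$, giving $\overline{\operatorname{logdens}}(P)\ge 1-\xi(T_1)/\xi(T_2)$ and $\overline{\operatorname{dens}}(P)=1$. I do not expect a genuine obstacle: the proof is essentially bookkeeping. The only points requiring a little care are (i) checking that the $o(\log r)$ size of $\log\phi$ leaves both the upper and the lower limit of $\varphi_1$ unchanged, and (ii) ensuring the nonnegativity of $\varphi_1,\varphi_2$ and the monotonicity of $\varphi_2\psi$ by restricting to a half-line $(r_0,\infty)$, which is harmless for the density computations.
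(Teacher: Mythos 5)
Your proposal is correct and follows essentially the same route as the paper, which likewise deduces the corollary from Corollary~\ref{coro} with $\varphi_1(r)=\bigl(\log T_1(r)+\log\phi(r)\bigr)/\log r$, $\varphi_2(r)=\log T_2(r)/\log r$ and $\psi(r)=\log r$. The extra bookkeeping you supply (restricting to a half-line for nonnegativity, checking that $\log\phi(r)=o(\log r)$ preserves both the upper and lower limits, and translating $\psi$- and $e^\psi$-densities into logarithmic and linear densities) is exactly what the paper leaves implicit.
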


The conclusion of Corollary~\ref{comparable-growth-thm} follows by using Corollary~\ref{coro} with
\begin{equation*}
\varphi_1(r) = \frac{\log T_1(r) + \log \phi(r)}{\log r}, \quad  \varphi_2(r)=\frac{\log T_2(r)}{\log r}  \quad\text{and} \quad  \psi(r)=\log r. \qedhere
\end{equation*}

Corollary~\ref{comparable-growth-thm} improves \cite[Lemma~7]{HILT}.
A special case of Corollary~\ref{comparable-growth-thm} is implicitly proved in \cite[p.~347]{KK} in the case $\overline{\rho}(T_2)<\infty$.

A possible choice for $\phi$ in Corollary~\ref{comparable-growth-thm} is $\phi(r)=(\log r)^\beta$, where $\beta>0$. 	
If we replace $ T_1(r) $ and $ T_2(r) $ by $ T(r,f) $ and $ T(r,g) $, respectively, where $ f $ and $ g $ are meromorphic functions, and if $\phi$ is unbounded, then Corollary~\ref{comparable-growth-thm} states in particular, that
\begin{equation}\label{small-function}
T(r,f)=o(T(r,g)),\quad r\in P.
\end{equation}
Thus $f$ is a small function of $g$ relative to the set $P$. Recall that in the complex function theory, a meromorphic function $f$ is said to be a small function of another meromorphic function $g$, if $T(r,f)=o(T(r,g))$ for all $r$ outside of a set of finite linear measure (or sometimes outside of a set of finite logarithmic measure). Small functions appear frequently in the theories of complex differential and functional equations, which in turn typically rely on growth estimates for logarithmic derivatives and for logarithmic differences. The former estimates are usually valid outside of exceptional sets of finite linear/logarithmic measure, while the exceptional sets in the latter estimates may go up to upper logarithmic density $<\veps$. Hence, in most cases, the definition of small functions could be relaxed to \eqref{small-function}, where the set $P$ is required to have positive logarithmic upper density.

Next, we give a result about comparing the growth of two functions in the case when they have the same order but different types.

\begin{cor}\label{comparable-growth-thm-2}
	Let $T_1, T_2 : \left( 1,\infty\right) \rightarrow (0,\infty)$ be continuous, non-decreasing functions both having order
	$\rho\in (0,\infty)$, and	 $\tau(T_1)<\tau(T_2)$.
	Let $C\in (1,\tau(T_2)/\tau(T_1))$. Then the set
	$$
	Q=\{r\geq 1 : CT_1(r)< T_2(r)\}
	$$
	satisfies
	\begin{equation*}\label{dens2}
	\overline{\operatorname{dens}}(Q)\geq 1-C^{1 / \rho}\left(\frac{\tau(T_1)}{\tau(T_2)}\right)^{1 / \rho}.
	\end{equation*}
\end{cor}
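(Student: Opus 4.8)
The plan is to obtain this corollary from Corollary~\ref{coro} by the same normalization used to pass from Theorem~\ref{thm-exceptional} to Corollary~\ref{Rod-Rike2}, namely taking $(1/\rho)$-th powers and dividing by $r$. Concretely, I would set, on $(1,\infty)$,
$$
\varphi_1(r) = C^{1/\rho}\,\frac{T_1(r)^{1/\rho}}{r},\qquad \varphi_2(r) = \frac{T_2(r)^{1/\rho}}{r},\qquad \psi(r) = r,
$$
so that $r_0 = 1$ and $R = +\infty$. Clearly $\varphi_1,\varphi_2$ are nonnegative and $\psi \in \mathcal{D}(1,\infty)$. Moreover $\varphi_2(r)\psi(r) = T_2(r)^{1/\rho}$, which is non-decreasing since $T_2$ is non-decreasing and $t\mapsto t^{1/\rho}$ is increasing; hence the hypotheses of Corollary~\ref{coro} on $\psi$ and $\varphi_2$ are met.

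Next I would compute the two limit superiors. Because $t\mapsto t^{1/\rho}$ is continuous and increasing, it commutes with $\limsup$, so
$$
k_1 := \limsup_{r\to\infty}\varphi_1(r) = C^{1/\rho}\Big(\limsup_{r\to\infty}\tfrac{T_1(r)}{r^{\rho}}\Big)^{1/\rho} = C^{1/\rho}\,\tau(T_1)^{1/\rho},
$$
and likewise $k_2 := \limsup_{r\to\infty}\varphi_2(r) = \tau(T_2)^{1/\rho}$. The assumption $1 < C < \tau(T_2)/\tau(T_1)$ is exactly $C\,\tau(T_1) < \tau(T_2)$, i.e. $k_1 < k_2$, so Corollary~\ref{coro} applies. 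It produces a set $G\subset(1,\infty)$ on which $\varphi_1(r) < \varphi_2(r)$ and with $\ud{\psi}{G}\ge 1 - k_1/k_2$.

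It then remains to translate back. Since $\psi(r) = r$, the $\psi$-density $\ud{\psi}{G}$ is precisely the upper linear density $\overline{\operatorname{dens}}(G)$; and, clearing the common factor $1/r$ and raising to the power $\rho$, the inequality $\varphi_1(r) < \varphi_2(r)$ is equivalent to $C T_1(r) < T_2(r)$, so $G = Q$ up to the irrelevant endpoint $r=1$. Substituting the values of $k_1$ and $k_2$ gives
$$
\overline{\operatorname{dens}}(Q) \ge 1 - \frac{k_1}{k_2} = 1 - C^{1/\rho}\Big(\frac{\tau(T_1)}{\tau(T_2)}\Big)^{1/\rho},
$$
which is the claim. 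I do not expect a genuine obstacle here: the only point needing a word of care is the interchange of $\limsup$ with the power map (immediate from the monotonicity and continuity of $t\mapsto t^{1/\rho}$), together with the degenerate cases $\tau(T_1) = 0$ or $\tau(T_2) = +\infty$, in which $k_1/k_2$ is read as $0$ and the $k_2 = +\infty$ branch of Corollary~\ref{coro} yields $\overline{\operatorname{dens}}(Q) = 1$, consistent with the stated bound. Note also that continuity of $T_1,T_2$ is not actually used in this argument, only their monotonicity.
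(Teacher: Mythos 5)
Your proposal is correct and follows exactly the paper's own route: the paper deduces this corollary from Corollary~\ref{coro} with the very same choices $\varphi_1(r)=C^{1/\rho}T_1(r)^{1/\rho}/r$, $\varphi_2(r)=T_2(r)^{1/\rho}/r$ and $\psi(r)=r$. Your write-up merely supplies the routine verifications (monotonicity of $\varphi_2\psi$, commuting $\limsup$ with $t\mapsto t^{1/\rho}$, identifying the $\psi$-density with the linear density) that the paper leaves implicit.
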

This follows by using Corollary~\ref{coro} with
\begin{equation*}
\varphi_1(r) = \frac{C^{1/\rho} T_1(r)^{1/\rho}}{r}, \quad \varphi_2(r) = \frac{T_2(r)^{1/\rho}}{r} \quad \text{and} \quad \psi(r) =r. \qedhere
\end{equation*}

In \cite[Lemma~4]{H}, it is shown that for a meromorphic function $ f $ of order $ \rho $, and for constants $ C_1>1 $ and $ C_2>1 $, the set
\begin{equation}\label{H}
U= \{r: T(C_1r,f)\ge C_2 T(r,f)\}
\end{equation}
satisfies
\begin{equation}\label{Hi}
\overline{\operatorname{logdens}}(U) \le  \frac{\rho\log C_1}{\log C_2}.
\end{equation}
If either $ \rho=0 $  or $ C_2^{1/\rho} \ge C_1 $, then the inequality \eqref{Hi} is meaningful, and it gives information about
size of the set $U$.
In the opposite case when $ \rho>0 $ and $ C_2^{1/\rho} < C_1 $, the quantity $ \frac{\rho\log C_1}{\log C_2} $ is larger than~1, and hence we may conclude nothing from \eqref{Hi}. In this case, the set $ U $ is expected to be large,
and its size can be estimated directly by means of Corollary~\ref{coro} with an additional assumption on the type of  $ f $. In fact, we have the following result.

\begin{cor}\label{coroH}
	Let $T:[1,\infty)\to (0,\infty)$ be a non-decreasing function of order $\rho\in (0,\infty)$ and of type $\tau\in (0,\infty)$. Let $ C_1>1 $ and $ C_2> 1$ be such that $C_2^{1/\rho} < C_1$. Then the set
	$$
	V=\{r: T(C_1r)\ge C_2 T(r)\}
	$$
	satisfies
	\begin{equation*}\label{Ni}
	\overline{\operatorname{dens}}(V)\geq 1- \frac{C_2^{1/\rho}}{C_1}.
	\end{equation*}
\end{cor}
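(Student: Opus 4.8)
The plan is to deduce Corollary~\ref{coroH} from Corollary~\ref{coro} by picking two functions whose upper limits record the type $\tau$ and whose comparison is equivalent to membership in $V$. Concretely, I would work on $(1,\infty)$ and set
$$
\varphi_1(r) = \frac{C_2^{1/\rho}\, T(r)^{1/\rho}}{C_1 r}, \qquad \varphi_2(r) = \frac{T(C_1 r)^{1/\rho}}{C_1 r}, \qquad \psi(r) = r .
$$
Since $C_1>1$ we have $C_1 r \ge 1$ for every $r\ge 1$, so $\varphi_2$ is well defined, and both $\varphi_1,\varphi_2$ map $(1,\infty)$ into $[0,\infty)$ because $T>0$. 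The key algebraic observation is that, after multiplying by $C_1 r>0$ and raising to the power $\rho>0$, the inequality $\varphi_1(r)<\varphi_2(r)$ is equivalent to $C_2 T(r) < T(C_1 r)$; in particular $\{r>1:\varphi_1(r)<\varphi_2(r)\}\subseteq V$.

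Next I would check the hypotheses of Corollary~\ref{coro}. We have $\psi(r)=r\in\mathcal{D}(1,\infty)$, and since $T$ is non-decreasing, $\varphi_2(r)\psi(r)=C_1^{-1}T(C_1 r)^{1/\rho}$ is non-decreasing on $(1,\infty)$. For the upper limits, continuity and monotonicity of $x\mapsto x^{1/\rho}$ give
$$
\limsup_{r\to\infty}\varphi_1(r) = \frac{C_2^{1/\rho}}{C_1}\left(\limsup_{r\to\infty}\frac{T(r)}{r^{\rho}}\right)^{\!1/\rho} = \frac{C_2^{1/\rho}}{C_1}\,\tau^{1/\rho} =: k_1,
$$
while substituting $s=C_1 r$ (a substitution that does not affect a $\limsup$ as $r\to\infty$) yields
$$
\limsup_{r\to\infty}\varphi_2(r) = \limsup_{s\to\infty}\frac{T(s)^{1/\rho}}{s} = \tau^{1/\rho} =: k_2 .
$$
Because $\tau\in(0,\infty)$, both $k_1$ and $k_2$ are finite and positive, and the hypothesis $C_2^{1/\rho}<C_1$ is exactly what makes $k_1<k_2$, so condition \eqref{comparing} holds.

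Finally I would invoke Corollary~\ref{coro}: the set $G=\{r>1:\varphi_1(r)<\varphi_2(r)\}$ satisfies $\ud{\psi}{G}\ge 1-k_1/k_2 = 1-C_2^{1/\rho}/C_1$. With $\psi(r)=r$ the $\psi$-density is precisely the (upper) linear density, so $\overline{\operatorname{dens}}(G)\ge 1-C_2^{1/\rho}/C_1$, and since $G\subseteq V$ the same bound passes to $\overline{\operatorname{dens}}(V)$, which is the assertion of Corollary~\ref{coroH}. I do not expect any real obstacle: the argument is a clean specialization of Corollary~\ref{coro}, and the only point needing a word of care is that Corollary~\ref{coro} delivers the strict inequality $\varphi_1<\varphi_2$, whereas $V$ is defined by the non-strict inequality $T(C_1 r)\ge C_2 T(r)$; this is harmless, since strict implies non-strict and hence the density lower bound transfers. (Note also that no continuity of $T$ is used, consistent with the hypotheses of the corollary.)
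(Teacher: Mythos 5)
Your proof is correct and essentially identical to the paper's: the paper also deduces the corollary from Corollary~\ref{coro} with $\psi(r)=r$ and the same pair of functions up to the harmless common factor $1/C_1$ (the paper writes $\varphi_1(r)=C_2T(r)^{1/\rho}/r$, where the constant $C_2$ is evidently a typo for $C_2^{1/\rho}$, as your computation of $k_1/k_2$ confirms). Your added remarks on the strict-versus-non-strict inequality and on the monotonicity of $\varphi_2(r)\psi(r)$ merely make explicit details the paper leaves implicit.
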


This follows by using Corollary~\ref{coro} with
\begin{equation*}
\varphi_1(r) = \frac{C_2T(r)^{1/\rho}}{r}, \quad \varphi_2(r)=\frac{T(C_1r)^{1/\rho}}{r}, \quad \psi(r)=r.\qedhere
\end{equation*}

Replacing $ T(r) $ with $ T(r,f) $ in Corollary~\ref{coroH}, where $ f $ is a meromorphic function of order $\rho\in (0,\infty)$ and of type $\tau\in (0,\infty)$, we find that the set $ U $ in \eqref{H}  is large in the sense that $ \overline{\operatorname{dens}}(U) \ge 1- C_2^{1/\rho} / C_1 $.

\section{Behavior of integrable functions at infinity}\label{limit-density}

Recently, Niculescu and Popovici \cite{NP, NP2} have discussed necessary conditions for the integrability of real-valued
functions based on a concept of limit in linear density. We will generalize this concept for the $\psi$-density, where
$ \psi \in \mathcal{D}(r_0,R) $. We say that a function  $ f:(r_0,R) \to \mathbb{R}$ has a limit $l\in \mathbb{R}$ in $\psi$-density as $r\to R^-$ if the set $\{r\in (r_0,R):|f(r)-l|\ge\varepsilon\}$ has zero $\psi$-density, whenever $\varepsilon>0$. We denote this limit~by
$$
\limd{\psi} _{r \to R^-} f(r)=l.
$$
The value $+\infty$ (resp. $-\infty$) is called the limit of $f$ in $\psi$-density as $r\to R^-$, and we denote~it
$$
\limd{\psi}_{r \to R^-} f(r)= +\infty\quad (\text{resp.}\,-\infty),
$$
if for each  $M\in \mathbb{R}$, the set $\{r\in(r_0,R) : f(r) \leq  M\}$ (resp. $\{r\in(r_0,R):f(r) \geq  M\}$) has zero $\psi$-density.
Clearly, if $\lim_{r \to R^-}f(r)=l$, then $\limd{\psi}_{r \to R^-} f(r)=l$ for any $ \psi \in \mathcal{D}(r_0,R) $. The converse is not true in general. For example, the function
\begin{equation}\label{fex}
f(r)=\left\{\begin{array}{ll}
1, & \text { for } r \in\left[n, n+1 / 2^{n}\right],\ n \in \mathbb{N}, \\
0, & \text { otherwise},
\end{array}\right.
\end{equation}
does not have a limit as $ r\to\infty $, but $ \limd{r}\limits_{r\to\infty}f(r)=0 $. 		


We prove the following result which gives a general necessary condition for integrable functions.

\begin{thm}\label{Th2}
	Let $ f:[r_0,\infty) \to \mathbb{R}$ be a locally integrable function on $ [r_0,\infty) $. If $\left|\int_{r_0}^{\infty}f(t)\,dt\right|<\infty$ , then for any $ \psi\in\mathcal{D}(r_0,\infty) $ we have
	\begin{equation}\label{lim1}
	\lim_{r \to \infty}\frac{1}{\psi(r)} \int_{r_0}^{r} \psi(t) f(t)\,dt =0.
	\end{equation}
	Moreover, if $\int_{r_0}^{\infty}|f(t)|\,dt<\infty$, then
	\begin{equation}\label{limd}
	\limd{\psi}\limits_{r \to \infty} \frac{\psi(r)}{\psi'(r)}f(r)=0.
	\end{equation}
\end{thm}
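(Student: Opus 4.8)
The plan is to treat the two assertions separately, deriving \eqref{lim1} first by an Abel--summation / integration-by-parts argument, and then bootstrapping \eqref{limd} from \eqref{lim1} via a Chebyshev-type estimate together with Lemma~\ref{lem}.

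For \eqref{lim1}, write $F(r)=\int_{r_0}^{r}f(t)\,dt$, which by hypothesis tends to a finite limit $F(\infty)$ as $r\to\infty$; in particular $F$ is bounded on $[r_0,\infty)$, say $|F(r)|\le C$. Integrating by parts,
\begin{equation*}
\int_{r_0}^{r}\psi(t)f(t)\,dt=\psi(r)F(r)-\int_{r_0}^{r}\psi'(t)F(t)\,dt,
\end{equation*}
so that
\begin{equation*}
\frac{1}{\psi(r)}\int_{r_0}^{r}\psi(t)f(t)\,dt=F(r)-\frac{1}{\psi(r)}\int_{r_0}^{r}\psi'(t)F(t)\,dt.
\end{equation*}
Since $F(t)\to F(\infty)$ and $\psi$ is increasing to $+\infty$, the averaging factor $\psi(r)^{-1}\int_{r_0}^{r}\psi'(t)\,dt=1-\psi(r_0)/\psi(r)\to 1$, and a standard Toeplitz/Cesàro-type argument (split the integral at a point $T$ beyond which $|F(t)-F(\infty)|<\delta$, and note the contribution of $[r_0,T]$ is $O(1/\psi(r))\to 0$) gives $\psi(r)^{-1}\int_{r_0}^{r}\psi'(t)F(t)\,dt\to F(\infty)$. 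Hence the right-hand side tends to $F(\infty)-F(\infty)=0$, which is \eqref{lim1}. The only mild care needed is that $\psi'$ need not be monotone, but positivity of $\psi'$ is all the splitting argument uses.

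For \eqref{limd}, assume now $\int_{r_0}^{\infty}|f(t)|\,dt<\infty$ and fix $\veps>0$. Let
\begin{equation*}
E_\veps=\left\{r\in(r_0,\infty):\left|\frac{\psi(r)}{\psi'(r)}f(r)\right|\ge\veps\right\}.
\end{equation*}
On $E_\veps$ we have $|f(r)|\ge\veps\,\psi'(r)/\psi(r)$, so, since $\psi'/\psi=(\log\psi)'$ and $\log\psi$ is increasing to $+\infty$,
\begin{equation*}
\int_{E_\veps}\frac{\psi'(t)}{\psi(t)}\,dt\le\frac{1}{\veps}\int_{E_\veps}|f(t)|\,dt\le\frac{1}{\veps}\int_{r_0}^{\infty}|f(t)|\,dt<\infty.
\end{equation*}
Now apply Lemma~\ref{lem} with the function $\log\psi$ in place of $\psi$: indeed $\log\psi\in\mathcal{D}(r_0,\infty)$, the left-hand integral above is exactly $\int_{E_\veps}d(\log\psi)(t)<\infty$, and $e^{\log\psi}=\psi$, so Lemma~\ref{lem} yields $\ud{\psi}{E_\veps}=0$. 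In particular $\ld{\psi}{E_\veps}=0$ as well, so $E_\veps$ has zero $\psi$-density for every $\veps>0$; by the definition of the limit in $\psi$-density this is precisely \eqref{limd}.

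The main obstacle is the bookkeeping in the first part: one must confirm that the integration-by-parts identity is legitimate for a merely locally integrable $f$ (it is, since $F$ is absolutely continuous and $\psi$ is $C^1$), and that the Cesàro-type convergence $\psi(r)^{-1}\int_{r_0}^{r}\psi'(t)F(t)\,dt\to F(\infty)$ genuinely goes through without any monotonicity assumption on $\psi'$ — only the fact that $\psi'>0$ and $\psi\to\infty$. Once \eqref{lim1} is in hand, the second part is essentially immediate from the Chebyshev inequality plus Lemma~\ref{lem}, the one cosmetic point being the substitution $\psi\rightsquigarrow\log\psi$ that turns the finite-measure hypothesis of Lemma~\ref{lem} into the finiteness of $\int_{E_\veps}|f|$.
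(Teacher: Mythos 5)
Your argument is correct. For \eqref{lim1} you are doing essentially what the paper does: both proofs rest on the integration by parts $\int_{r_0}^{r}\psi f\,dt=\psi(r)F(r)-\int_{r_0}^{r}\psi'F\,dt$ with $F(t)=\int_{r_0}^{t}f$, the paper merely packaging the Toeplitz step as an explicit $\veps/3$-splitting at a point $r_1$ beyond which the tail of $\int f$ is small. For \eqref{limd}, however, you take a genuinely different route. The paper keeps everything inside the theorem: writing $S_\veps=\{r:\frac{\psi(r)}{\psi'(r)}|f(r)|>\veps\}$, it bounds
\begin{equation*}
\frac{1}{\psi(r)}\int_{S_\veps\cap[r_0,r)}\psi'(t)\,dt\le\frac{1}{\veps\,\psi(r)}\int_{r_0}^{r}\psi(t)|f(t)|\,dt,
\end{equation*}
and lets the right-hand side tend to $0$ by \eqref{lim1} applied to $|f|$; thus the second assertion is deduced directly from the first. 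You instead use Chebyshev to show that $\int_{E_\veps}d(\log\psi)<\infty$ and then invoke Lemma~\ref{lem} with $\log\psi$ in place of $\psi$, so that $\ud{e^{\log\psi}}{E_\veps}=\ud{\psi}{E_\veps}=0$. This is a nice observation: it makes the second assertion logically independent of \eqref{lim1} and reuses machinery already in the paper. The only point to tidy up is that Lemma~\ref{lem} requires members of $\mathcal{D}(r_0,\infty)$ to be \emph{positive}, and $\log\psi$ may be negative near $r_0$ if $\psi<1$ there; this is harmless because you can replace $r_0$ by any $r_0'$ with $\psi(r_0')\ge 1$ without affecting any upper $\psi$-density, but you should say so explicitly.
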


\begin{proof}
	Let $\varepsilon>0$ and $ \psi\in \mathcal{D}(r_0,\infty) $. Then there exists an $r_1>r_0$ such that for every $ r>r_1 $,
	$$
	\left| \int_{r_1}^{r}f(t)\,dt\right| <\frac{\varepsilon}{3} \quad  \text{and} \quad \left| \frac{1}{\psi(r)} \int_{r_0}^{r_1}\psi(t)f(t)\,dt\right| <\frac{\varepsilon}{3}.
	$$
	Therefore, for every $ r>r_1 $, we have
	\begin{eqnarray*}
		\left|\frac{1}{\psi(r)} \int_{r_0}^{r} \psi(t)f(t)\,dt \right|&=& \left|\frac{1}{\psi(r)}\left( \int_{r_0}^{r_1} \psi(t)f(t)\,dt + \int_{r_1}^{r}\psi(t)\left( \int_{r_1}^{t}f(s)ds\right)'\, dt\right) \right|\\
		&\le& \left|\frac{1}{\psi(r)} \int_{r_0}^{r_1} \psi(t)f(t)\,dt\right|+ \left| \int_{r_1}^{r} f(s)\,ds \right|+
		\left|\frac{1}{\psi(r)}\int_{r_1}^{r} \psi'(t)\left( \int_{r_1}^{t}f(s)\,ds\right)\,dt\right| \\
		&<& \frac{\veps}{3} + \frac{\veps}{3} + \frac{\psi(r)-\psi(r_1)}{\psi(r)}\frac{\veps}{3}< \veps,
	\end{eqnarray*}
	which results in \eqref{lim1}.
	
	Now, assume that $\int_{r_0}^{\infty}|f(t)|\,dt<\infty$. Let $ \varepsilon>0 $ and $S_{\varepsilon}=\{r>r_0:\frac{\psi(r)}{\psi'(r)}|f(r)|>~\varepsilon\}$. Then, by using \eqref{lim1}, we get
	$$
	0\le \frac{1}{\psi(r)}\int_{S_{\varepsilon}\cap[r_0,r)}\psi'(t)\, dt \leq \frac{1}{\veps\psi(r)} \int_{r_0}^{r} \psi(t)|f(t)|\, dt\to 0, \quad r\to\infty,
	$$
	which means $ \ud{\psi}{S_\veps}=0 $ for every $ \veps>0 $, and hence we get \eqref{limd}.
\end{proof}

The first part of Theorem~\ref{Th2} generalizes \cite[Theorem~0.1]{Mi}, while the second part generalizes \cite[Theorems 3--4]{NP} and \cite[Theorem 2]{NP2}. The first part can be used to show the divergence of $ \int_{r_0}^{\infty}f(t)\,dt $ as follows:
If there exists a $ \psi\in\mathcal{D}(r_0,\infty) $ such that \eqref{lim1} does not hold, then  $\int_{r_0}^{\infty}f(t)\,dt$ diverges.
The following example shows that the first part of Theorem~\ref{Th2} is stronger than \cite[Theorem~0.1]{Mi}.

\begin{example}
	Consider the improper integral
	$$
	I=\int_{2}^{\infty}\frac{\sin^2 t}{t\log t}\,dt.
	$$
	We have, by L'Hospital's rule,
	$$
	\lim_{r\to\infty} \frac{1}{r} \int_2^r  \frac{\sin^2 t}{\log t}\, dt= 0.
	$$
	From \cite[Theorem~0.1]{Mi}, we conclude nothing about the integral $ I $. However, if we take $ f(r) =  \frac{\sin^2 r}{r\log r}$ and $ \psi(r) = \log r $ in \eqref{lim1}, we find
	$$
	\lim_{r \to \infty}\frac{1}{\log r}\int_{2}^{r}\frac{\sin^2t}{t}\,dt
	=\lim_{r \to \infty}\frac{1}{2\log r}(\log r+\operatorname{Ci}(2r))=\frac{1}{2},
	$$
	where $\operatorname{Ci}(r)=-\int_{r}^{\infty}\frac{\cos t}{t}\,dt$ is the cosine integral. Thus, according to Theorem~\ref{Th2}, the improper integral $ I $ diverges.
\end{example}

It is natural to ask whether the limit in $ \psi $-density \eqref{limd} can be improved to the usual limit.
Surprisingly, Theorem~\ref{thm-exceptional} plays a key role in finding a sufficient condition to ensure that  \eqref{limd} is improved to the usual limit.  In fact, we have the following result.
\begin{thm}\label{Th3}
	Let $ f: [r_0,\infty) \to \mathbb{R}$ be a function satisfying $\int_{r_0}^{\infty}|f(t)|\,dt<\infty$. Suppose there exists $ \psi \in \mathcal{D}(r_0,\infty) $ such that one of the following holds:
	\begin{itemize}
		\item[(i)] 	${\psi(r)^2}|f(r)| / {\psi'(r)}$ is non-decreasing on $ (r_1,\infty) $ for some $ r_1\ge r_0 $,
		\item[(ii)] 	${|f(r)|}/{\psi'(r)}$ is non-increasing on $ (r_1,\infty) $ for some $ r_1\ge r_0 $.
	\end{itemize}
	Then
	$$
	\lim_{r \to \infty}\frac{\psi(r)}{\psi'(r)}f(r)=0.
	$$
\end{thm}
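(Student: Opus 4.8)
The plan is to put $\varphi(r)=\frac{\psi(r)}{\psi'(r)}\,|f(r)|$, which is non-negative since $\psi>0$ and $\psi'>0$, and to prove $\lim_{r\to\infty}\varphi(r)=0$; as $\psi/\psi'>0$ this is equivalent to the stated conclusion. The input is the second half of Theorem~\ref{Th2}: since $\int_{r_0}^{\infty}|f|<\infty$, we already know $\limd{\psi}_{r\to\infty}\varphi(r)=0$, that is, $\ud{\psi}{S_\varepsilon}=0$ for every $\varepsilon>0$, where $S_\varepsilon=\{r>r_0:\varphi(r)>\varepsilon\}$. Because $\psi$ is unbounded, any set containing a tail $(r_2,\infty)$ has $\psi$-density $1$; so $\varphi$ cannot stay bounded away from $0$, and it remains only to show that $K:=\limsup_{r\to\infty}\varphi(r)$ equals $0$.

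Assume hypothesis (i). Since $\psi(r)^2|f(r)|/\psi'(r)=\psi(r)\varphi(r)$, condition (i) says exactly that $\varphi\psi$ is non-decreasing on $(r_1,\infty)$; replacing $r_0$ by $r_1$ alters neither the $\psi$-density of sets near infinity nor limits as $r\to\infty$, and keeps $\int|f|<\infty$, so $\varphi$ and $\psi$ fit the hypotheses of Theorem~\ref{thm-exceptional}. If $K=+\infty$, then $H_M=S_M$ would satisfy $\ud{\psi}{H_M}=1$ for every $M>0$ by the last assertion of that theorem, contradicting $\ud{\psi}{S_M}=0$. If $0<K<\infty$, choose $\varepsilon\in(0,K)$; then $F_\varepsilon=\{r:|\varphi(r)-K|<\varepsilon\}$ has $\ud{\psi}{F_\varepsilon}\ge\varepsilon/K>0$, yet $F_\varepsilon\subset\{r:\varphi(r)>K-\varepsilon\}\subset S_{K-\varepsilon}$ forces $\ud{\psi}{F_\varepsilon}\le\ud{\psi}{S_{K-\varepsilon}}=0$, a contradiction. (If instead $\liminf\varphi=\limsup\varphi=K\in(0,\infty)$, then $\varphi>K/2$ on a tail, again contradicting $\ud{\psi}{S_{K/2}}=0$.) Hence $K=0$.

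Assume hypothesis (ii). Set $g(r)=|f(r)|/\psi'(r)$, which is non-increasing on $(r_1,\infty)$, so $\varphi(r)=\psi(r)g(r)$; we may assume $g>0$ near infinity, for otherwise $g$, and hence $f$, vanishes identically there and the claim is trivial. For large $r$ let $v(r)=\psi^{-1}\!\bigl(\tfrac{1}{2}\psi(r)\bigr)$, so $r_1<v(r)<r$ and $v(r)\to\infty$ (the substitution already used in the proof of Lemma~\ref{lem}). For $t\in[v(r),r]$ one has $|f(t)|=g(t)\psi'(t)\ge g(r)\psi'(t)$, hence
$$
\int_{v(r)}^{r}|f(t)|\,dt\ \ge\ g(r)\!\int_{v(r)}^{r}\!\psi'(t)\,dt\ =\ g(r)\bigl(\psi(r)-\psi(v(r))\bigr)\ =\ \tfrac{1}{2}\,\psi(r)g(r)\ =\ \tfrac{1}{2}\,\varphi(r).
$$
Since $\int_{r_0}^{\infty}|f|<\infty$ and $v(r)\to\infty$, the left-hand side tends to $0$, so $\varphi(r)\to0$.

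The only genuine difficulty lies in case (i): the point is to recognize that the monotonicity hypothesis (i) is precisely what licenses Theorem~\ref{thm-exceptional}, and then to play its lower density bounds for $F_\varepsilon$ (resp.\ $H_M$) against the upper density information $\ud{\psi}{S_\varepsilon}=0$ coming from Theorem~\ref{Th2}, so as to pin $\limsup\varphi$ down to $0$; a small bookkeeping point is to absorb the initial segment $(r_0,r_1)$ so that the monotonicity holds on the whole domain. Case (ii) is an elementary splitting argument in the style of Lemma~\ref{lem} and poses no obstacle.
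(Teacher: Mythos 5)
Your proof is correct and follows essentially the same route as the paper: case (ii) is verbatim the paper's splitting argument with $s(r)=\psi^{-1}(\tfrac12\psi(r))$, and in case (i) the paper simply packages your density-contradiction argument (playing the lower bounds of Theorem~\ref{thm-exceptional} for $F_\veps$ and $H_M$ against $\ud{\psi}{S_\veps}=0$ from Theorem~\ref{Th2}) into the separate Lemma~\ref{Th 5}, which you have in effect reproved inline for the special case $l=0$.
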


The following lemma, which relies on Theorem~\ref{thm-exceptional}, is needed to prove Theorem~\ref{Th3} in the the case when (i) holds.

\begin{lemma}\label{Th 5}
	Let $ 0<r_0<R\le +\infty $, and let $ f:[r_0,R) \to [0,\infty)$. Suppose that there exists a $ \psi \in \mathcal{D}(r_0,R) $ such that $\limd{\psi}\limits_{r \to R^-} f(r)=l$. Then either $\lim\limits _{r \to R^-} f(r)=l$ or the function $f(r) \psi(r)$ is not non-decreasing on $(r_0,R)$.
\end{lemma}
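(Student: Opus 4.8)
The plan is to prove the equivalent implication: if $f(r)\psi(r)$ is non-decreasing on $(r_0,R)$, then $\lim_{r\to R^-}f(r)=l$. Throughout I would write $k=\liminf_{r\to R^-}f(r)$ and $K=\limsup_{r\to R^-}f(r)$, so that $0\le k\le K\le+\infty$ since $f\ge 0$. I would use two elementary properties of $\psi$-density: a subset of a set of zero upper $\psi$-density again has zero upper $\psi$-density, and a set containing a left-neighbourhood of $R$ has $\psi$-density $1$ (because $\psi(r)\to+\infty$ as $r\to R^-$). As a first observation: if $l$ is finite, then the hypothesis that $f$ converges to $l$ in $\psi$-density already forces $k\le l\le K$; indeed, were $K<l$ then $f$ would eventually lie below $(K+l)/2<l$, so $\{r:|f(r)-l|\ge(l-K)/2\}$ would contain a left-neighbourhood of $R$ and could not be $\psi$-null, and the bound $k\le l$ is symmetric.

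The central step would be to show $k=K$, arguing by contradiction from $k<K$. If $K<+\infty$, then $0\le k<K<+\infty$, so, since $f\psi$ is non-decreasing, Theorem~\ref{thm-exceptional} applies with $\varphi=f$ and gives, for each $\veps>0$, that $F_\veps=\{r\in(r_0,R):|f(r)-K|<\veps\}$ and $G_\veps=\{r\in(r_0,R):|f(r)-k|<\veps\}$ satisfy $\ud{\psi}{F_\veps}\ge\veps/K>0$ and $\ud{\psi}{G_\veps}\ge\veps/(k+\veps)>0$. If $l$ is finite, then $k\le l\le K$ together with $k<K$ forces $l<K$ or $l>k$; in the first case, choosing $\veps\in(0,K-l)$ makes $F_\veps$ a subset of the $\psi$-null set $\{r:|f(r)-l|\ge K-l-\veps\}$, contradicting $\ud{\psi}{F_\veps}>0$, and the second case is symmetric with $G_\veps$ and $\veps\in(0,l-k)$. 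If $l=+\infty$, then $F_\veps\subseteq\{r:f(r)\le K+\veps\}$, which the hypothesis forces to be $\psi$-null, again contradicting $\ud{\psi}{F_\veps}>0$. If instead $K=+\infty$, I would invoke the final part of Theorem~\ref{thm-exceptional}: for all large $M$ the set $H_M=\{r\in(r_0,R):f(r)>M\}$ has $\ud{\psi}{H_M}=1$ and $\ld{\psi}{H_M}\le k/M$. When $l$ is finite, choosing $M\ge l+1$ gives $H_M\subseteq\{r:|f(r)-l|\ge1\}$, so $\ud{\psi}{H_M}=0$, contradicting $\ud{\psi}{H_M}=1$; when $l=+\infty$, the hypothesis gives $\ud{\psi}{H_M^c}=0$, hence $\ld{\psi}{H_M}=1$ by \eqref{com}, contradicting $\ld{\psi}{H_M}\le k/M<1$ for $M>k$. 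In every branch we reach a contradiction, so $k=K$.

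Finally I would identify the common value. Since $k=K$, the limit $c:=\lim_{r\to R^-}f(r)$ exists in $[0,+\infty]$. If $c<+\infty$, then $l$ must be finite (otherwise $\{r:f(r)\le c+1\}$ contains a left-neighbourhood of $R$ and cannot be $\psi$-null), and then the first observation gives $c=k\le l\le K=c$, so $l=c$; if $c=+\infty$, then $l$ cannot be finite for the same reason, so $l=c$. Either way $l=\lim_{r\to R^-}f(r)$, which is the claim. I expect the main obstacle to be not any deep point but the careful bookkeeping in the degenerate cases $K=+\infty$ and $l=+\infty$, where the inequalities $k\le l\le K$ lose their meaning and the contradiction has to be extracted directly from the quantitative density estimates of Theorem~\ref{thm-exceptional}; everything else is routine manipulation with $\psi$-densities.
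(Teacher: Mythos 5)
Your proof is correct and follows essentially the same route as the paper's: assume $f\psi$ is non-decreasing and $k=\liminf f\neq\limsup f=K$, invoke Theorem~\ref{thm-exceptional} to produce sets of positive upper $\psi$-density near the values $k$ and $K$, and contradict the fact that the definition of limit in $\psi$-density forces these sets (up to a bounded piece) to be $\psi$-null. Your write-up is in fact somewhat more complete than the paper's, which treats only $l<\infty$ explicitly and does not spell out the subcases $K=+\infty$ or the final identification of the limit with $l$.
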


\begin{proof}
	We consider the case $l\in \left[ 0,\infty\right) $ only since the case $l=\infty$ follows similarly. Suppose that there exists a $ \psi \in \mathcal{D}(r_0,R) $ such that $\limd{\psi}_{r \to R^-} f(r)=l$. Moreover, suppose on the contrary to the assertion that $f(r) \psi(r)$ is non-decreasing on $(r_0,R)$ and that $f(r)$ doesn't have a limit as $r\to R^-$, i.e.,
	$$
	k=\liminf_{r\to R^-} f(r)\neq \limsup_{r\to R^-} f(r).
	$$ 	
	
	Let $ \veps>0 $, and let 
$$A_\veps = \left\{r\in (r_0,R): |f(r)-k|<\varepsilon \right\},\quad L_\veps = \left\{r\in (r_0,R): |f(r)-l|\ge\varepsilon \right\}.$$
	First, we prove that $ l\neq k $. Assume on the contrary that $ l = k $. Then, from Theorem~\ref{thm-exceptional} we obtain that
	$$
	\ud{\psi}{L_\veps} = \ud{\psi}{A_\veps^c}= 1-\ld{\psi}{A_\veps}>0,
	$$
	which is a contradiction with the definition of limits in $ \psi $-density. Thus $ l \neq k $.
	
	Next, we prove that $A_{\varepsilon}$ and $L_{\varepsilon}^c$ are disjoint for all $r\geq r^*$, where $r^*\in (r_0,R)$.
	Assume on the contrary that there exists an increasing sequence $ (r_n) $  on  $ A_\veps \cap L_\veps^c $ such that $ r_n\to R $ as $ n\to\infty $. Then
	$$
	|k-l| \le |f(r_n)-l| + |f(r_n)-k| < 2\veps , \quad n\to \infty,
	$$
	and this leads to $ k=l $, which is a contradiction.  Therefore $A_{\varepsilon}\subset L_{\varepsilon}\cup (r_0,r^*)$. It follows from this, Theorem~\ref{thm-exceptional} and the definition of limits in $ \psi $-density,  that
	$$
	0< \ud{\psi}{A_{\varepsilon}}\leq\ud{\psi}{L_{\varepsilon}}+ \ud{\psi}{(r_0,r^*)} =0,
	$$
	which is a contradiction.  Thus, either $\lim\limits _{r \to R^-} f(r)=l$ or $f(r) \psi(r)$ is not non-decreasing on $(r_0,R)$
\end{proof}

\begin{proof}[Proof of Theorem~\ref{Th3}]
	(i) From Theorem~\ref{Th2}, we have
	$$
	\limd{\psi}_{r \to \infty} \frac{\psi(r)}{\psi'(r)} |f(r)|=0.
	$$
	Since $\left[\frac{\psi(r)}{\psi'(r)}|f(r)| \right]\psi(r)$ is non-decreasing, we infer from Lemma~\ref{Th 5} that $\lim\limits_{r \to \infty} \frac{\psi(r)}{\psi'(r)}f(r)=~0$.
	
	(ii) Let $ s(r) = \psi^{-1}(\frac{1}{2}\psi(r)) $. We have for every $ r>\psi^{-1}(2\psi(r_1)) $,
	$$
	\left|\frac{\psi(r)}{\psi'(r)} f(r) \right| = 2\frac{|f(r)|}{\psi'(r)}\int_{s(r)}^{r} \psi'(t)dt \le 2\int_{s(r)}^{r} \frac{|f(t)|}{\psi'(t)}\psi'(t)dt=2\int_{s(r)}^{r} |f(t)|dt
	$$
	from which it follows that $\lim\limits_{r \to \infty} \frac{\psi(r)}{\psi'(r)}f(r)=~0.$
\end{proof}

The following example illustrates Theorem~\ref{Th3}.

\begin{example}\label{ex}
	The function
	$$f(r)=\frac{1}{r(\log r)^{\beta}}, \quad \beta>1,$$
	is integrable on $(e,\infty)$. By taking $\psi(r)=r$ in Theorem~\ref{Th3}, we see that both conditions (i) and (ii) hold and  hence $\lim\limits _{r \to \infty} rf(r)=0$.
	
	If we take $ \psi(r)=(\log r)^\beta $ in Theorem~\ref{Th3}, then we see that both conditions (i) and (ii) hold and  hence $\lim\limits _{r \to \infty} r\log r f(r)=0$.
	
	If we take $ \psi(r) = \log(\log(r))$ in Theorem~\ref{Th3}, then the condition (i) does not hold. Meanwhile, the condition (ii) holds and hence $ \lim \limits_{r \rightarrow \infty} r\log(r){\log (\log (r))} f(r)=0 $.
\end{example}



\section*{Acknowledgment}
Heittokangas wants to thank the School of Mathematical Sciences at the Fudan University for its hospitality during his visit in August 2019. Latreuch was supported by the Directorate General for Scientific Research and Technological Development (DGRSDT) of Algeria.
Wang was supported by the Natural Science Foundation of China (No.~11771090).

\vspace{1cm}
\noindent
{Janne~Heittokangas}, {University of Eastern Finland \\
	Department of Physics and Mathematics,\\
	P.O. Box 111, 80101 Joensuu, Finland}\\
e-mail: \href{mailto:janne.heittokangas@uef.fi}{janne.heittokangas@uef.fi}
\bigskip

\noindent
{Zinelaabidine~Latreuch}, {University of Mostaganem\newline
Department of Mathematics\newline
Laboratory of Pure and Applied Mathematics\\
 B.~P.~227 Mostaganem, Algeria\\
e-mail: \href{mailto:z.latreuch@gmail.com}{z.latreuch@gmail.com}}
\bigskip

\noindent{Jun~Wang}, {Fudan University\newline
 School of Mathematical Sciences\newline
 Shanghai 200433, P.~R.~China} \\
e-mail:  \href{mailto:majwang@fudan.edu.cn}{majwang@fudan.edu.cn}
\bigskip

\noindent
{Amine~Zemirni},{University of Eastern Finland\\
	Department of Physics and Mathematics,\\
	P.O. Box 111, 80101 Joensuu, Finland}\\
e-mail: \href{mailto:amine.zemirni@uef.fi}{amine.zemirni@uef.fi}

\end{document}